\theoremstyle{plain}
\newtheorem{theorem}{Theorem}[section]
\newtheorem{proposition}[theorem]{Proposition}
\newtheorem{lemma}[theorem]{Lemma}
\newtheorem{corollary}[theorem]{Corollary}
\theoremstyle{definition}
\newtheorem{example}[theorem]{Example}
\newtheorem{definition}[theorem]{Definition}
\newtheorem{remark}[theorem]{Remark}
\newcommand{\C}{{\mathbb C}}
\newcommand{\R}{{\mathbb R}}
\newcommand{\N}{{\mathbb N}}
\newcommand{\X}{{\mathbb X}}
\newcommand{\Sc}{{\mathcal S}}
\newcommand{\U}{{\mathcal U}}
\newcommand{\V}{{\mathcal V}}
\newcommand{\x}{{\mathbf x}}
\newcommand{\aaa}{{\mathbf a}}
\newcommand{\alfa}{{\bm \alpha}}
\newcommand{\f}{{\mathbf f}}
\newcommand{\g}{{\mathbf g}}
\def\LT{\mathop{\rm LT}\nolimits}
\def\Mat{\mathop{\rm Mat}\nolimits}
\def\Spec{\mathop{\rm Spec}\nolimits}
\def\Jac{\mathop{\rm Jac}\nolimits}
\def\y1{\mathbf{y}^{(1)}}
\let\epsilon=\varepsilon
\let\rho=\varrho
\let\phi=\varphi
\let\To=\longrightarrow
\def\tfrac #1#2{{\textstyle\frac{#1}{#2}}}
\def\cocoa{\mbox{\rm
C\kern-.13em o\kern-.07 em C\kern-.13em o\kern-.15em A}}
\def\apcocoa{\mbox{\rm
A\kern-0.13em p\kern -0.07em C\kern-.13em o\kern-.07 em C\kern-.13em
o\kern-.15em A}}
\begin{document}

\title{Stable Complete Intersections}

\author{Lorenzo Robbiano}
\address{Dipartimento di Matematica, Universit\`a di Genova, Via
Dodecaneso 35,
I-16146 Genova, Italy}
\email{robbiano@dima.unige.it}

\author{Maria Laura Torrente}
\address{Dipartimento di Matematica, Universit\`a di Genova, Via
Dodecaneso 35,
I-16146 Genova, Italy}
\email{torrente@dima.unige.it}

\date{\today}
              \keywords{complete intersection, condition number}

\begin{abstract}
A complete intersection of $n$ polynomials in $n$ 
indeterminates has only a finite number of zeros. 
In this paper we address the following question:
how do the zeros change when the coefficients of the 
polynomials are perturbed? In the first part we 
show how to construct semi-algebraic sets in the 
parameter space over which all the complete intersection ideals
share the same number of isolated real zeros. In the second part 
we show how to modify the complete intersection and get a 
new one which generates the same ideal but whose real zeros 
are more stable with respect to perturbations of the coefficients.

\end{abstract}

\subjclass[2010]{Primary 13C40, Secondary 14M10, 65F35, 65H04}

\maketitle
\section{Introduction}
What is the defining (or vanishing) ideal of a finite set $\X$ of points 
in the affine space? 
The standard answer is that it is the set of all  the polynomials which vanish
at~$\X$.  And there are very efficient methods to compute it, based on 
Buchberger-M\"oller's algorithm (see for 
instance~\cite{ABKR}, \cite{AKR} and~\cite{BM}). 

However, the logical and computational environment changes 
completely when the coordinates of the points are perturbed by errors, 
a situation which is normal when dealing with real world problems.
In that case one has to use  approximation and to consider the question
of stability. Introductory material  about this topic can be found in 
the book~\cite{RA},  in particular in the
paper~\cite{KPR} and its bibliography. 

The methods used so far share 
the strategy of modifying the Buchberger-M\"oller Algorithm and compute
a Gr\"obner basis or a border basis of an ideal of polynomials which
{\it almost vanish}\/ at $\X$ (see for instance~\cite{F} and~\cite{FT}). 
A key remark is that, whatever algorithm is used, 
at a certain moment one has computed $n$ polynomials $f_1, \dots, f_n$ 
which generate a zero-dimensional ideal. 
Since the dimension has dropped from $n$ to zero, 
the $n$ polynomials form a complete intersection 
which almost vanishes at~$\X$.
Further steps in the algorithm will be used to eliminate spurious 
points and to produce a Gr\"obner or border basis.  

Now, a complete intersection of $n$ polynomials in $n$ 
indeterminates has only a finite number of zeros, and the main 
question is: how do the zeros change when the coefficients of the 
polynomials are perturbed? Can we devise a strategy to make 
the situation reasonably stable? In other words, can we change 
the generating polynomials so that the stability of their 
common zeros increases? 
It is well-known that for a linear system with $n$ equations and $n$ 
unknowns, the most stable situation occurs when the coefficient 
matrix is orthonormal. Is there an analogue to orthonormality when  
we deal with polynomial systems?

In numerical analysis the condition number of a problem
measures the sensitivity of the solution to small changes
in the input data, and so it
reveals how numerically well-conditioned the problem is.
There exist a huge body of results about condition
numbers for various numerical problems, for instance
the solution of a linear system, the problem of matrix inversion,
the least squares problem, and the computation of eigenvalues and
eigenvectors.

On the other hand, not very much is known about condition numbers
of polynomial systems. As a notable exception we mention the
paper~\cite{SS93} of Shub and Smale  who
treated the case of zero-dimensional homogeneous polynomial
systems; later on their result was extended by 
D\'egot (see~\cite{D01}) to the case of positive-dimensional 
homogeneous polynomial systems.

Tackling the above mentioned problem entails a 
preliminary analysis of the following question of algebraic nature. 
If we are given a zero-dimensional complete intersection of 
polynomials with simple zeros, how far can we perturb 
the coefficients so that the zeros remain smooth
and their number does not change? 
It is quite clear that smoothness and constancy 
of the number of zeros are essential if we want to 
consider the perturbation a good one.

Starting with the classical idea that a perturbed system is a 
member of a family of systems, we describe a good subset of 
the parameter space over which the members of the family 
share the property that their zero sets
have the same number of smooth real points. 
This is the content of Section~\ref{Families of Complete Intersections}
where we describe a free (see~Proposition~\ref{flatness}), and a smooth
(see~Theorem~\ref{jacobian}) locus in the parameter space. 
Then we provide a suitable algorithm to 
compute what we call an $I$-optimal subscheme of the parameter 
space (see Corollary~\ref{algo-optimal}): it is a subscheme 
over which the complete intersection schemes are smooth 
and have the same number of complex points. 
The last important result of Section~\ref{Families of Complete Intersections}
is Theorem~\ref{sturm} which proves the existence of an 
open non-empty semi-algebraic subscheme of the $I$-optimal subscheme 
over which the number of real zeros is constant.

Having described a good subscheme of the parameter space 
over which we are allowed to move, and hence over which we 
can perturb our data, we pass in Section~\ref{Condition Numbers} 
to the next problem and  concentrate
our investigation on a single point of the zero set. 
After some preparatory results, 
we introduce a local condition number
(see Definition~\ref{LocalCondNumb}) and with its help we 
prove Theorem~\ref{theoremCN} 
which has the merit of fully generalizing a classical result in 
numerical linear algebra (see Remark~\ref{remarkCN}).

The subsequent short 
Section~\ref{Optimization of the local condition number}
illustrates how to manipulate the equations in order to 
lower, and sometimes to minimize, the local condition number
(see Proposition~\ref{scalingCN}). Then we concentrate on the 
case of the matrix 2-norm and  show how to 
achieve the minimum when the  polynomials involved  
have equal degree (see Proposition~\ref{min2norm}).
The final Section~\ref{Experiments} describes  examples
which indicate that our approach is good, in particular we see that when 
the local condition number is lowered, indeed the corresponding 
solution is more stable. 

This paper reports on the first part of a wider investigation.
Another paper is already planned to describe how to deal with 
global condition numbers and how to generalize our method 
to the case where the polynomials  involved  have arbitrary degrees.

All the supporting computations were performed 
with \cocoa\ (see~\cite{Co}).
We thank Marie-Fran\c coise Roy and Saugata Basu for 
some help in the proof of Theorem~\ref{sturm}.



\section{Families of Complete Intersections}
\label{Families of Complete Intersections}

Given a zero-dimensional 
smooth complete intersection $\X$, we want to embed it into
a family of zero-dimensional complete intersections and study
when and how~$\X$  can move inside the family. 
In particular, we study the locus of the parameter-space 
over which the fibers are smooth with the same number 
of points as~$\X$, and we give special emphasis
to the case of real points.

\medskip

We start the section by recalling some definitions.
The notation is borrowed
from~\cite{KR1} and~\cite{KR2}, in particular
we let~$x_1, \dots, x_n$ be
indeterminates and let $\mathbb T^n$
be the monoid of the power products in the
symbols $x_1, \dots, x_n$.
Most of the times, for simplicity we use the notation
$\x = x_1, \dots, x_n$.
If~$K$ is a field, the multivariate
polynomial ring~$K[\x]=K[x_1,\dots,x_n]$ is
denoted by~$P$, and if $f_1(\x), \dots, f_k(\x)$
are polynomials in $P$,
the set~$\{f_1(\x), \dots, f_k(\x)\}$ is denoted by~$\f(\x)$
(or simply by~$\f$).
Finally, we denote the {\it polynomial system}\/
associated to~$\f(\x)$  by~$\f(\x)=0$ (or simply by $\f=0$), 
and we say that the system is zero-dimensional if 
the ideal generated by~$\f(\x)$ is zero-dimensional
(see~\cite{KR1}, Section~3.7).

Easy examples show that, unlike the homogeneous case,
in the inhomogeneous case regular sequences are not 
independent of the order of their entries. For instance,  
if $f_1 = y(x+1)$, $f_2 = z(x+1)$, $f_3 = x$, 
then $(f_1, f_2, f_3)$  is not a regular sequence, 
while $(f_3, f_1, f_2)$ is such. However,
we prefer to avoid a distinction between these cases,
and we call them {\it complete intersections}.
In other words, we use the following definition.

\begin{definition}
Let $t$ be a positive integer, let~$\f(\x)$ be a set
of~$t$ polynomials  in $P=K[x_1, \dots, x_n]$
and let $I$ be the ideal generated by $\f(\x)$.
\begin{itemize}
\item[(a)] The set~$\f(\x)$ (and the ideal~$I$)
is called a {\bf complete intersection}  if the equality 
$\dim(P/I) = n-t$ holds.

\item[(b)]  The set~$\f(\x)$  (and the ideal~$I$)
is called a {\bf zero-dimensional complete intersection}
if it is a complete intersection and $t=n$.
\end{itemize}

\end{definition}

\noindent 
Let $n$ be a positive integer,
let~$P$ denote the polynomial
ring~$K[x_1, \dots,x_n]$, 
let $\f(\x)=\{f_1(\x),\ldots,f_n(\x)\}$
be a zero-dimensional complete intersection, and let~$I$ 
be the ideal of~$P$ generated by~$\f(\x)$.
We let~$ m$ be a positive integer
and let $\aaa = (a_1, \dots, a_m)$
be an $m$-tuple of indeterminates which will play the role of
parameters. If $F_1(\aaa, \x), \ldots, F_n(\aaa, \x)$ are polynomials
in~$K[\aaa, \x]$ we let~${F(\aaa, \x)=0}$ be the corresponding 
family of systems of  equations 
parametrized by~$\aaa$, and 
the ideal generated by~$F(\aaa,\x)$ in $K[\aaa,\x]$
is denoted by $I(\aaa,\x)$. 
If the scheme of the $\aaa$-parameters is $\Sc$,
then there is a $K$-algebra homomorphism
$\phi: K[\aaa] \To K[\aaa,\x]/I(\aaa,\x)$ or, equivalently, 
a morphism of schemes
$\Phi: \Spec(K[\aaa,\x]/I(\aaa,\x)) \To \Sc$.

Although it is not strictly necessary for the theory, for our applications
it suffices to consider independent parameters. Here is 
the formal definition.

\begin{definition}\label{independparams}
If $\Sc= \mathbb A^m_K$ and 
${I(\aaa,\x)\cap K[\aaa] = (0)}$, then  the parameters~$\aaa$ are 
said to be  {\bf independent} with respect to $F(\aaa,\x)$, or simply 
independent if the context is clear.
\end{definition}

The first important step is to embed the 
system~$\f(\x)=0$ into a  family, but we must be careful 
and exclude families of the following type.

\begin{example}\label{bad}
Consider the family ${F(a,\x) = \{ x_1(ax_2+1), x_2(ax_2+1)\}}$.
It is a zero dimensional complete intersection only for $a = 0$
while the generic member is positive-dimensional.
\end{example}

\begin{definition}
Let~$\f(\x)$ be a set of polynomials  in $P=K[x_1, \dots, x_n]$ so
that~$\f(\x)$ is a zero-dimensional complete intersection
and let~$F(\aaa, \x)$ be a family parametrized by~$m$ 
independent parameters~$\aaa$. 
We say that $F(\aaa, \x)$ (and similarly $K[\aaa,\x]/I(\aaa,\x)$ 
and $\Spec(K[\aaa,\x]/I(\aaa,\x))$) is 
a {\bf generically zero-dim\-ensional 
family containing $\f(\x)$}, if~$\f(\x)$ is a member of the 
family and the generic member of the family is
a zero-dimensional complete intersection.
\end{definition}

A  theorem called {\it generic flatness}\/ (see~\cite{E}, Theorem 14.4)
prescribes the existence of a non-empty Zariski-open
subscheme $\U$ of $\Sc$ over which the morphism 
$\Phi^{-1}(\U) \To \U$ is {\it flat}. In particular, it is possible 
to explicitly compute a subscheme over which the morphism is free.
To do this, Gr\"obner bases reveal themselves as a fundamental tool.

\begin{definition}\label{iflat}
Let $F(\aaa, \x)$ be a generically zero-dimensional family 
which contains a zero-dimensional complete intersection~$\f(\x)$.
Let $\Sc=\mathbb A^m_K$ be the scheme of the independent 
$\aaa$-parameters
and let $\Phi: \Spec(K[\aaa,\x]/I(\aaa,\x))  \To \Sc$ be the associated
morphism of schemes.
A dense Zariski-open subscheme~$\U$ of~$\Sc$
such that~${\Phi^{-1}(\U) \To \U}$ is
free (flat, faithfully flat), is said to be an~{\bf $I$-free 
($I$-flat, $I{\bf-faithfully\ flat}$}) subscheme of~$\Sc$ or
simply an~$I$-free ($I$-flat, $I$-faithfully flat) scheme.
\end{definition}

\begin{proposition}\label{flatness}
With the above assumptions and notation,
let $I(\aaa, \x)$ be the ideal 
generated by~$F(\aaa,\x)$ in $K[\aaa,\x]$, let $\sigma$ 
be a term ordering on~$\mathbb T^n$, let~$G(\aaa,\x)$ be the 
reduced~$\sigma$-Gr\"obner basis of the ideal~$I(\aaa, \x)K(\aaa)[\x]$, 
let~$d(\aaa)$ be the least common multiple of all the denominators  
of the coefficients of the polynomials in $G(\aaa,\x)$, 
and let~$T =\mathbb T^n\setminus \LT_\sigma(I(\aaa,\x)K(\aaa)[\x])$.
\begin{itemize}
\item[(a)]  The open subscheme $\U$ of $\mathbb A^m_K$ 
defined by $d(\aaa)\ne 0$ is~$I$-free.

\item[(b)] The multiplicity of each fiber over $\U$ coincides with
the cardinality of~$T$.
\end{itemize}
\end{proposition}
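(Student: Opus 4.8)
The plan is to reduce everything to the standard field-theoretic behaviour of reduced Gröbner bases under specialization. Set $K(\aaa)$ to be the field of fractions of $K[\aaa]$, and regard $I(\aaa,\x)K(\aaa)[\x]$ as an ideal in the polynomial ring over this field; by construction $G(\aaa,\x)$ is its reduced $\sigma$-Gröbner basis, and clearing denominators shows each element of $G(\aaa,\x)$ lies in $K[\aaa][1/d(\aaa)][\x]$. Thus over the localization $A := K[\aaa][1/d(\aaa)] = \mathcal{O}(\U)$ we have a well-defined finite set $G(\aaa,\x) \subseteq A[\x]$ whose leading term set is exactly $\LT_\sigma(I(\aaa,\x)K(\aaa)[\x])$, with all leading coefficients units in $A$ (they are monomials in the $a_i$ divided by $d(\aaa)$, hence invertible after inverting $d(\aaa)$).

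First I would prove that $\{G(\aaa,\x)\}$ is a $\sigma$-Gröbner basis of $I(\aaa,\x)A[\x]$ as an ideal of $A[\x]$. The leading coefficients being units in $A$ means the division algorithm works verbatim over $A$, so the $S$-polynomial / Buchberger criterion makes sense; since all $S$-polynomials reduce to zero over $K(\aaa)[\x]$, the same reductions (with unit leading coefficients) are valid over $A[\x]$, and one checks $I(\aaa,\x)A[\x] = (G(\aaa,\x))$ by writing generators of $I(\aaa,\x)$ in terms of $G$ over $K(\aaa)[\x]$ and clearing the denominator $d(\aaa)$, which is a unit. Consequently the residue classes of the elements of $T = \mathbb T^n \setminus \LT_\sigma(I(\aaa,\x)K(\aaa)[\x])$ form an $A$-module basis of $A[\x]/I(\aaa,\x)A[\x]$: spanning follows from the division algorithm over $A$, and $A$-linear independence follows because any nontrivial $A$-relation would, after embedding $A \hookrightarrow K(\aaa)$, give a nontrivial $K(\aaa)$-relation among the images in $K(\aaa)[\x]/I(\aaa,\x)K(\aaa)[\x]$, contradicting that $T$ is a monomial basis there. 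Since $\Phi^{-1}(\U) = \Spec(A[\x]/I(\aaa,\x)A[\x])$, this exhibits the coordinate ring of $\Phi^{-1}(\U)$ as a free $A$-module of rank $|T|$, which is precisely the assertion that $\U$ is $I$-free; note $T$ is finite because the generic member is zero-dimensional, so $\LT_\sigma$ contains a pure power of each $x_i$.

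For part (b), fix a point $\alfa \in \U$, i.e. a $K$-algebra homomorphism $A \to \overline{K}$ (or $A \to \kappa$ for the residue field $\kappa$ at that point). The fiber of $\Phi$ over $\alfa$ has coordinate ring $\kappa[\x]/I(\alfa,\x)$, obtained by base change $-\otimes_A \kappa$ from the free $A$-module $A[\x]/I(\aaa,\x)A[\x]$; hence $\dim_\kappa \kappa[\x]/I(\alfa,\x) = |T|$, and this dimension is exactly the multiplicity of the (zero-dimensional) fiber scheme. One subtlety worth a sentence: specializing $d(\aaa)$ to a nonzero value keeps the leading coefficients of $G$ invertible, so the specialized $G(\alfa,\x)$ is still a Gröbner basis with the same leading term ideal — this is really the same freeness statement, now seen fiberwise, and it also confirms the fiber is zero-dimensional with the stated colength.

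The main obstacle is the bookkeeping around leading coefficients: one must verify carefully that the reduced Gröbner basis over $K(\aaa)[\x]$, after multiplying through by $d(\aaa)$, has \emph{unit} leading coefficients in $A$ (not merely nonzero elements of $K[\aaa]$) so that the division algorithm, Buchberger's criterion, and the base-change argument all go through over the non-field ring $A$. Everything else is a routine transcription of the classical "Gröbner bases specialize well outside the locus where a denominator vanishes" argument, together with the elementary fact that base change of a finite free module preserves the rank.
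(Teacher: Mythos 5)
Your proposal follows the same route as the paper's proof: localize $K[\aaa]$ at $d(\aaa)$ to obtain $A = \mathcal{O}(\U)$, observe that the reduced Gr\"obner basis $G(\aaa,\x)$ lives in $A[\x]$ with unit leading coefficients, and conclude that the residue classes of $T$ give a free $A$-basis of $A[\x]/I(\aaa,\x)A[\x]$, with part (b) then following by base change. However, the crucial step---the equality $(G(\aaa,\x))A[\x] = I(\aaa,\x)A[\x]$---is only half-justified in what you wrote. Your argument (``writing generators of $I(\aaa,\x)$ in terms of $G$ over $K(\aaa)[\x]$ and clearing the denominator $d(\aaa)$'') establishes only the inclusion $I(\aaa,\x)A[\x] \subseteq (G(\aaa,\x))A[\x]$. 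What the spanning argument actually requires is the \emph{reverse} inclusion $(G(\aaa,\x))A[\x] \subseteq I(\aaa,\x)A[\x]$: after reducing an arbitrary $f \in A[\x]$ to a normal form $r$ supported on $T$ via the division algorithm, the congruence $f \equiv r$ holds modulo $(G)A[\x]$, and you need this to be a congruence modulo $I(\aaa,\x)A[\x]$. That inclusion does not follow from the direction you proved: a priori, writing an element $g \in G$ as a $K(\aaa)[\x]$-linear combination of the $F_i$ may introduce denominators that are not powers of $d(\aaa)$; equivalently, $A[\x]/I(\aaa,\x)A[\x]$ could have nonzero $A$-torsion, which is invisible in $A[\x]/(G)A[\x]$. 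The paper's own proof is equally terse at this point (it simply asserts that the reduced Gr\"obner basis specializes to the reduced Gr\"obner basis of each fiber over $\U$), but your write-up makes the gap more conspicuous by claiming to verify the equality while only addressing one inclusion.

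A smaller point: the ``main obstacle'' you single out at the end---checking that the leading coefficients of $G$ become units in $A$ after multiplying by $d(\aaa)$---is not actually an obstacle at all. Since $G$ is a \emph{reduced} Gr\"obner basis over the field $K(\aaa)$, its elements are monic; their leading coefficients are $1$, which are units in any ring. The genuine delicacy is the missing containment described above, not the unit status of leading coefficients.
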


\begin{proof}
The assumption that $F(\aaa,\x)$ is a generically zero-dimensional 
family implies that 
${\rm Spec}\big(K(\aaa)[\x]/I(\aaa,\x)K(\aaa)[\x]\big) 
\To  {\rm Spec}(K(\aaa))$ 
is finite, in other words that $K(\aaa)[\x]/I(\aaa,\x)K(\aaa)[\x]$ is a 
finite-dimensional~$K(\aaa)$-vector space.
A standard result in Gr\"obner basis theory 
(see for instance~\cite{KR1}, Theorem 1.5.7) 
shows that the residue classes of the elements 
in~$T$ form a~$K(\aaa)$-basis 
of this vector space. We denote by $\U$ the open subscheme 
of~$\mathbb A^m_K$ defined by $d(\aaa) \ne 0$. 
For every point in $\U$, the given 
reduced Gr\"obner basis evaluates to the reduced 
Gr\"obner basis of the corresponding ideal. 
Therefore the leading term ideal is the 
same for all these fibers, and so is its complement~$T$. 
If we denote by~$K[\aaa]_{d(\aaa)}$ 
the localization of~$K[\aaa]$ at the 
element  $d(\aaa)$ and by $I(\aaa,\x)^e$ the 
extension of the ideal~$I(\aaa,\x)$ to the 
ring~$K[\aaa]_{d(\aaa)}$, 
then  $K[\aaa]_{d(\aaa)}[\x]/I(\aaa,\x)^e$ turns out 
to be a free~$K[\aaa]_{d(\aaa)}$-module.
So claim (a) is proved.
Claim (b) follows immediately from (a).
\end{proof}

\begin{remark}\label{varie}
We collect here a few remarks about  this proposition.
First of all, we observe that the term ordering  $\sigma$ can be 
chosen arbitrarily.
Secondly, for every $\alpha\in \U$ let~$L_\alpha$ 
be the leading term ideal 
of the corresponding ideal $I_\alpha$.
If $\sigma$ is a  degree-compatible term ordering, 
then $L_\alpha$ is  is also 
the leading term ideal of the 
homogenization~$I_\alpha^{\rm hom}$ of $I_\alpha$ 
(see~\cite{KR2}, Proposition 5.6.3 and its proof).
\end{remark}

\goodbreak

\begin{example}\label{secondflat}
We consider the ideal ${I = (f_1, g)}$ of $K[x,y]$
where~$f_1 = x^3-y$, 
$g = x(x-1)(x+1)(x-2)(x+2)(x-3)(x+3)(x+13)(x^2+x+1)$.
We check that $I = (f_1,f_2)$
where $f_2 = xy^3 + 504x^2y - 183xy^2 + 14y^3 - 504x^2 
+ 650xy - 147y^2 - 468x + 133y$.
It is a zero-dimensional complete intersection and we 
embed it into the family~$I(\aaa,\x) = (ax^3-y, g)$. 
If we pick $\sigma = {\tt Lex}$ with $y>x$ and
perform the computation as suggested by the 
proposition, we get the freeness of 
the family for all $a$. Instead, we get the freeness of the 
family~$I(\aaa,\x) = (ax^3-y, f_2)$ for~$a\ne0$ (see a further 
discussion in Example~\ref{continued}).
\end{example}

\begin{example}\label{twopoints}
We let  $P = \C[x]$, the univariate polynomial ring,  and 
embed the ideal~$I$ generated by the following polynomial
${x^2-3x+2}$ into the generically zero-dimensional
family~${F(\aaa, x) = \{a_1x^2 - a_2x + a_3\}}$.
Such  family is given by the canonical $K$-algebra
homomorphism 
$$
\phi:\!  \C[\aaa]\! \To\! \C[\aaa, x]/(a_1, a_2, a_3)/(a_1x^2 - a_2x + a_3)
$$ 
It is a zero dimensional complete intersection~for

$\{\alfa \in \C^3 \ | \   \alpha_1\ne 0\}\  \cup \
\{\alfa \in \C^3 \ | \    \alpha_1=0, \ \alpha_2 \ne 0\}$.

\noindent It represents two distinct smooth points for

$\{\alfa \in \C^3 \ | \   \alpha_1 \ne 0, \
\alpha_2^2-4\alpha_1\alpha_3 \ne 0\}$.

\noindent It represents
a smooth point for
$\{\alfa \in \C^3 \ | \   \alpha_1=0,\ \alpha_2\ne 0\}$.

\noindent It is not a zero-dimensional complete intersection for
$\{\alfa \in \C^3 \ | \   \alpha_1=0,\ \alpha_2=0\}$.
\end{example}

This kind of examples motivates the following definition.

\begin{definition}\label{ismooth}
Let $F(\aaa, \x)$ be a generically zero-dimensional family 
containing a zero-dimensional complete intersection~$\f(\x)$.
Let $\Sc=\mathbb A^m_K$ be the scheme of the 
independent~$\aaa$-parameters
and let $\Phi\!\!: \Spec(K[\aaa,\x]/I(\aaa,\x))  \To \Sc$ be the 
associated morphism of schemes.
A dense Zariski-open subscheme~$\U$ of~$\Sc$
such that~$\Phi^{-1}(\U) \To \U$ is {\bf smooth}, i.e.\ all the 
fibers of~$\Phi^{-1}(\U) \To \U$
are zero-dimensional smooth complete intersections, 
is said to be an~{\bf $I$-smooth} subscheme of $\Sc$ or
simply an~$I$-smooth scheme.
\end{definition}

For instance in Example~\ref{twopoints}  we 
have the equality ~$\Sc = {\mathbb A}_\C^3$ 
and the following open set
$\U=\{\alfa \in \C^3 \ | \   \alpha_1 \ne 0, \
\alpha_2^2-4\alpha_1\alpha_3 \ne 0\}$
is $I$-smooth.

\begin{remark}
We observe that a dense $I$-smooth scheme may 
not exist. It suffices to consider the ideal $I = (x-1)^2$ 
embedded into the family $(x-a)^2$. 
In any event, a practical way to find one, if there is one, 
is via Jacobians, as we are going to show.
\end{remark}

\begin{theorem}\label{jacobian}
Let $F(\aaa, \x)$ be a generically zero-dimensional family 
containing a zero-dimensional complete intersection~$\f(\x)$.
We let $\Sc=\mathbb A^m_K$ be the scheme of the 
independent~$\aaa$-parameters, let $I(\aaa, \x)$ be the ideal 
generated by~$F(\aaa,\x)$ in~$K[\aaa,\x]$,
let~$D(\aaa, \x)\!=\!\det(\Jac_F(\aaa, \x))$ be the determinant of
the Jacobian matrix of~$F(\aaa, \x)$ with respect to the 
indeterminates~$\x$, let
$J(\aaa,\x)$ be the ideal sum~$I(\aaa,\x) + 
(D(\aaa,\x))$ in~$K[\aaa,\x]$, and 
let $H$ be the ideal in~$K[\aaa]$ defined by the 
equality~$H = J(\aaa,\x) \cap K[\aaa]$.
\begin{itemize}
\item[(a)] There exists an $I$-smooth subscheme 
of $\Sc$ if and only if $H\ne (0)$.
\item[(b)] If $0\ne h(\aaa) \in H$ then the open subscheme of $\Sc$ 
defined by the inequality $h(\aaa)\ne 0$ is $I$-smooth.
\end{itemize}
\end{theorem}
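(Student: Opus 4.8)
The plan is to characterize $I$-smoothness fiberwise and then extract the condition on the parameter space via elimination. First I would recall what it means for a fiber $\Phi^{-1}(\alpha)$ to be a zero-dimensional smooth complete intersection: by the Jacobian criterion, the fiber over $\alpha \in \Sc$ is smooth (at all its points) precisely when the ideal $I_\alpha + (D_\alpha)$ is the unit ideal, i.e.\ when the system $F(\alpha,\x) = 0$ has no common solution with $D(\alpha,\x) = 0$. In scheme-theoretic terms, $\Phi^{-1}(\alpha)$ is smooth if and only if $\alpha$ is not in the image under $\Phi$ of the closed subscheme $V(J(\aaa,\x)) = \Spec(K[\aaa,\x]/J(\aaa,\x))$. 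Since $\f(\x)$ itself is a zero-dimensional \emph{smooth} complete intersection, the fiber over the point corresponding to $\f$ is smooth, so the locus of bad parameters is a proper subset of $\Sc$; the real question is whether it is contained in a proper closed subscheme.

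The key algebraic input is the relationship between elimination ideals and images of morphisms. Let $H = J(\aaa,\x) \cap K[\aaa]$ be the elimination ideal. I would invoke the standard fact (the geometric content of the Closure Theorem / elimination theory, cf.\ \cite{KR1}) that $V(H) \subseteq \Sc$ is the Zariski closure of $\pi(V(J(\aaa,\x)))$, where $\pi$ is the projection forgetting $\x$. Concretely: a point $\alpha \in \Sc$ with $h(\alpha) \ne 0$ for some $h \in H$ cannot lie in $\pi(V(J(\aaa,\x)))$, because any point of $V(J(\aaa,\x))$ lying over $\alpha$ would force every element of $H$ to vanish at $\alpha$. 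Hence for such $\alpha$ the fiber ideal $J_\alpha = I_\alpha + (D_\alpha)$ has no zeros over $\bar K$, so by Nullstellensatz $J_\alpha$ is the unit ideal, which is exactly the statement that $\Phi^{-1}(\alpha) \to \{\alpha\}$ is smooth and (being a fiber of a generically zero-dimensional family, which I may shrink to guarantee zero-dimensionality, e.g.\ intersecting with the $I$-free locus of Proposition~\ref{flatness}) a zero-dimensional smooth complete intersection.

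For part (b), the open subscheme $\U = \{h(\aaa) \ne 0\}$ is dense (it is a nonempty Zariski-open in the irreducible $\mathbb A^m_K$, nonempty because $h \ne 0$), and by the argument above every fiber of $\Phi^{-1}(\U) \to \U$ is a zero-dimensional smooth complete intersection, hence $\Phi^{-1}(\U) \to \U$ is smooth and $\U$ is $I$-smooth. For part (a), one direction is immediate from (b): if $H \ne (0)$, pick any nonzero $h \in H$ and apply (b). For the converse, suppose an $I$-smooth subscheme $\U$ exists; I would argue that then $V(J(\aaa,\x))$ lies over the complement of $\U$, which is a proper closed subset of $\Sc$, hence contained in $V(g)$ for some $0 \ne g \in K[\aaa]$. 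Since $V(J(\aaa,\x)) \subseteq V(g) \times \mathbb A^n$, the polynomial $g$ vanishes on $V(J(\aaa,\x))$, so by the Nullstellensatz some power $g^r \in J(\aaa,\x)$; then $g^r \in J(\aaa,\x) \cap K[\aaa] = H$ and $g^r \ne 0$, so $H \ne (0)$.

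The main obstacle I anticipate is the bookkeeping needed to pass cleanly between the scheme-theoretic notion of ``smooth morphism'' in Definition~\ref{ismooth} and the pointwise Jacobian condition, together with making sure the fibers genuinely remain zero-dimensional complete intersections (not just smooth) over the chosen open set — this is why one should intersect with the $I$-free locus from Proposition~\ref{flatness}, over which the fiber dimension is constant and equal to that of $\f(\x)$. A secondary subtlety is whether one needs $K$ algebraically closed (or perfect) for the Jacobian criterion and the Nullstellensatz step; I would either assume this or note that smoothness can be checked after base change to $\bar K$, so the elimination ideal $H$ computed over $K$ still does the job.
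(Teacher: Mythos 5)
Your proposal is correct, and it reaches the same conclusion by a genuinely different route than the paper, so a comparison is worthwhile.

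For the forward direction (claim (b) and one half of (a)), the paper argues more directly and in a Nullstellensatz-free way: from $h\in H\subseteq J$ one has an explicit identity $h(\aaa)=a(\aaa,\x)f(\aaa,\x)+b(\aaa,\x)D(\aaa,\x)$ with $f\in I(\aaa,\x)$; dividing by $h(\aaa)$ gives $1=\frac{a}{h}f+\frac{b}{h}D$ in $J(\aaa,\x)K(\aaa)[\x]$, and specializing at any $\alfa$ with $h(\alfa)\ne 0$ shows that $1\in J_\alfa$, i.e.\ that $I_\alfa$ and $D_\alfa$ have no common zero. Your route -- $h(\alfa)\ne0$ implies $\alfa\notin\pi(V(J))$, so $V(J_\alfa)(\bar K)=\emptyset$, so $J_\alfa=(1)$ by the Nullstellensatz -- is valid but slightly roundabout: you do not actually need the Nullstellensatz here, since $h\in J$ and $h(\alfa)\ne 0$ already put the nonzero constant $h(\alfa)$ into $J_\alfa$, which is the same certificate the paper produces. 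For the converse of (a), the paper argues from $H=(0)$ that $K[\aaa]\to K[\aaa,\x]/J(\aaa,\x)$ is injective, hence $\Spec(K[\aaa,\x]/J(\aaa,\x))\to\mathbb A^m_K$ is dominant, so the generic fiber of $I(\aaa,\x)$ meets $V(D)$ and is therefore not smooth; you instead run the contrapositive, taking the closed complement of a hypothetical $I$-smooth $\U$, enclosing it in a hypersurface $V(g)$, and using the Nullstellensatz to put $g^r\in J\cap K[\aaa]=H$. Both arguments are fine; the paper's is arguably more in the spirit of ``dominant morphism $\Rightarrow$ nonempty generic fiber,'' while yours is more in the spirit of elimination ideals and the Closure Theorem.

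Two of your side remarks are genuinely useful additions that the paper glosses over. First, you note that $J_\alfa=(1)$ only certifies that $V(I_\alfa)$ has no singular points; to be sure the fiber is a zero-dimensional complete intersection (and in particular is nonempty with the expected multiplicity) one should intersect with the $I$-free locus from Proposition~\ref{flatness} -- the paper defers this to the notion of $I$-optimal subscheme in Corollary~\ref{algo-optimal}, but does not address it inside Theorem~\ref{jacobian} itself. Second, you correctly flag that both the Jacobian criterion and the Nullstellensatz steps are geometric statements that should be read over $\bar K$, a point the paper leaves implicit.
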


\begin{proof}
To prove one implication of claim (a), and simultaneously 
claim  (b), we assume that $H \ne (0)$ and  let $0\ne h(\aaa) \in H$.
We have an equality of type $h(\aaa) = 
a(\aaa,\x) f(\aaa,\x) + b(\aaa,\x) D(\aaa,\x)$
with~$f(\aaa,\x) \in I(\aaa,\x)$, and hence an equality
$1 = \frac{a(\aaa,\x)}{h(\aaa)}f(\aaa,\x) + 
\frac{b(\aaa,\x)}{h(\aaa)}D(\aaa,\x)$
in~$J(\aaa, \x)K(\aaa)[\x]$.
For every $\alpha\in\Sc$ such that~$h(\alpha) \ne 0$ 
the equality implies that 
the corresponding complete intersection has no common zeros 
with the determinant of its Jacobian matrix, hence it is smooth.
Conversely, assume that $H=(0)$. Then the canonical $K$-algebra 
homomorphism~$K[\aaa] \To K[\aaa,\x]/J(\aaa,\x)$ is injective 
and hence it induces a 
morphism $\Spec\big(K[\aaa,\x]/J(\aaa,\x)\big) \To \mathbb A^m_K$ 
of affine schemes which is dominant. It means that for a 
generic point of~$\mathbb A^m_K$, the 
scheme $\Spec\big(K[\aaa,\x]/J(\aaa,\x)\big)$ 
is not empty, and hence the corresponding complete 
intersection $\Spec\big(K[\aaa,\x]/I(\aaa,\x)\big)$ is not smooth.
\end{proof}

%

The following example illustrates these results.

\begin{example}\label{firstflat}
Let us consider the polynomials 
$f_1 = x_1^2+x_2^2-1$, ${f_2 = x_2^2+x_1}$ in~$\ \C[x_1,\!x_2]$
and the ideal ${I \!=\! (f_1, f_2)}$ generated by them.
It is a zero-dimensional complete intersection and we 
embed it into  $I(\aaa,\x) = ({x_1^2+a_1x_2^2-1},\  \ x_2^2+a_2x_1)$. 
It is a free family over $\mathbb A^2_\C$, and the 
multiplicity of each fiber is $4$. 
We compute $D(\aaa,\x)=\det(\Jac_F(\aaa, \x))$
and get $D(\aaa,\x) = -2a_1a_2x_2 + 4x_1x_2$. 
We let 
$$J (\aaa,\x)= I(\aaa,\x) +(D(\aaa,\x) )=
(x_1^2 + a_1x_2^2 -1,\ x_2^2+a_2x_1, \ -2a_1a_2x_2 + 4x_1x_2)$$
A computation with \cocoa\ of  $\ {\tt Elim}([x_1,x_2], J)$ 
yields~$(\tfrac{1}{2}a_1^2a_2^3+2a_2)$, and 
hence $J(\aaa,\x)\cap K[\aaa] = (\tfrac{1}{2}a_1^2a_2^3+2a_2)$.
According to the theorem, if $ \U$ is the complement 
in~$\mathbb A^2_\C$ of the curve  defined 
by $\tfrac{1}{2}a_1^2a_2^3 + 2a_2=0$, 
then~$\U$ is an~\hbox{$I$-smooth}  
subscheme of~$\mathbb A^2_\C$.  
On the other hand, the curve has three components, $a_2=0$,
and ${a_1a_2\pm 2i=0}$. If $a_2=0$ then  the 
corresponding ideal is $(x_1^2-1,x_2^2)$ which is not smooth. 
If $a_1a_2\pm 2i=0$, then the corresponding ideals
are 
$(x_1^2 \mp \tfrac{2i}{a_2}x_2^2 - 1,\  x_2^2+a_2x_1)$
which can be written as
$((x_1\pm i)^2,\  x_2^2+a_2x_1)$
and hence are not smooth.

Let us now consider  the zero-dimensional complete 
intersection described by the  ideal $I = (f_1, f_2)$ 
where $f_1 = x_1^2+x_2^2$, $f_2 = x_2^2+x_1$.
We embed it into the 
family~$I(\aaa,\x) = (x_1^2-a_1x_2^2,\ x_2^2+a_2x_1)$. 
As before, it is a free family
over $\mathbb A^2_\C$, and the multiplicity 
of each fiber is $4$. 
We compute $D(\aaa,\x)=\det(\Jac_F(\aaa, \x))$
and get $D(\aaa,\x) = 2a_1a_2x_2 + 4x_1x_2$.  
The computation of~$\ {\tt Elim}([x,y], J)$ 
yields~$(0)$, and hence there is no 
subscheme of $\mathbb A^2_K$ which is $I$-smooth. 
Indeed, for $a_2\ne0$ we 
have $I (\aaa,\x)= (x_1+\frac{1}{a_2}x_2^2,\  
\frac{1}{a_2^2}x_2^4-a_1x_2^2)$ which is not smooth. 
Incidentally, we observe that also for $a_2=0$ the 
corresponding zero-dimensional complete 
intersection is not smooth.

\end{example}

The following example illustrates other subtleties 
related to the theorem.

\begin{example}{(\bf Example~\ref{secondflat} 
continued)}\label{continued}\\
We consider the family $I(\aaa,\x) = (ax^3-y, f_2)$ 
for~$a\ne0$ of Example~\ref{secondflat}, 
compute $D(\aaa,\x)=\det(\Jac_F(\aaa, \x))$
and get $D(\aaa,\x) = 9ax^3y^2 + 1512ax^4- 1098ax^3y + 126ax^2y^2 
+ 1950ax^3 - 882ax^2y + y^3 + 399ax^2 + 1008xy 
- 183y^2 - 1008x + 650y - 468$. 
We let $J (\aaa,\x)= I(\aaa,\x) +(D(\aaa,\x) )$ and get 
$J(\aaa,\x)\cap K[\aaa] = (h(\aaa))$ where {\tiny $$h(\aaa)=
a^9 -  \frac{738170716516748}{7749152384519}a^8 + 
\frac{218039463835944563500746}{91409877182005574647}a^7 - 
\frac{166557011563009981474061668}{31353587873427912103921}a^6
$$
$$
-\frac{276169260891419750846552207}{31353587873427912103921}a^5 
+ \frac{986809115998719019081678896}{31353587873427912103921}a^4 
- \frac{63247607413926237871517952}{31353587873427912103921}a^3
$$
$$
-\frac{1316764479863922379654192128}{31353587873427912103921}a^2 
+ \frac{317872550804296477704192}{13058553883143653521}a 
- \frac{974975584016793600000}{266501099655992929}
$$}
Therefore, if $\ \U$ denotes the 
complement in $\mathbb A^1_K$ of the zeros of $h(a)$, the theorem 
says that it is a Zariski-open $I$-smooth subscheme. However, 
we have already seen in Example~\ref{secondflat} that
$a=0$ (the origin is  in~$\U$) is not in the free locus: we observe
that the corresponding complete intersection is smooth, but it has 
only two points. 
The other subtlety is that the B\'ezout number of the family 
is $3\times 4=12$, but if we substitute $y = ax^3$ into $f_2$ 
we get a univariate polynomial of degree~$10$.
The two {\it missing}\/ points are at infinity. No member of the 
family represents twelve points. The final remark is that if we 
move the parameter $a$ in the locus described 
by~$a\!\cdot \!h(a) \ne 0$ we always get a smooth 
complete intersection of~$10$ points. 
If $K = \C$ the ten points have complex coordinates, some of 
them are real, but there are no values of $a$ for which all 
the $10$ points are real. The reason is that 
if ${r_1 = \frac{-1+\sqrt{3}i}{2},\  r_2= \frac{-1-\sqrt{3}i}{2}}$ are the two
complex roots of $x^2+x+1 =0$, then two of the ten points are 
$(r_1, r_1^3)$, $(r_2,r_2^3)$ which are not real points
(see Theorem~\ref{sturm} and Example~\ref{realroots}).

\end{example}

Combining Theorem~\ref{jacobian} and Proposition \ref{flatness} 
we get a   method to select a Zariski-open 
subscheme of the parameter space over which all the fibers are 
smooth complete intersections of  constant multiplicity
(see~\cite{SW05} for similar results). 
Before describing the algorithm, we need 
a definition which captures this concept.

\begin{definition}\label{ioptimal}
With the above notation, 
a dense Zariski-open subscheme~$\U$ of~$\Sc$
such $\Phi^{-1}(\U) \To \U$ is smooth and free  is said to be 
 an~{\bf $I$-optimal} subscheme of~$\Sc$. 
\end{definition}

\goodbreak
\begin{corollary}\label{algo-optimal}
Let $\Sc = \mathbb A^m_K$ and
consider the following sequence of instructions.
\begin{itemize}
\item[(1)] Compute $D(\aaa,\x) = \det({\rm Jac}_F(\aaa,\x))$. 

\item[(2)] Let $J(\aaa,\x) = I(\aaa,\x) + (D(\aaa,\x))$ and 
compute $H =J(\aaa,\x) \cap K[\aaa]$.

\item[(3)] If $H = (0)$ return {\rm  ``There is no $I$-smooth 
subscheme of $\mathbb A^m_K\,$"} and stop.

\item[(4)] Choose $h(\aaa) \in H\setminus 0$ and
let  $\U_1=\mathbb A^m_K\setminus\{\alfa \in \mathbb A^m_K \, |\, h(\alfa)=0\}$.

\item[(5)] Choose a term 
ordering $\sigma$ on $\mathbb T^n$ and compute the 
reduced $\sigma$-Gr\"obner basis $G(\aaa,\x)$ of~$I(\aaa,\x)K(\aaa)[\x]$

\item[(6)]
Let $T =\mathbb T^n\setminus \LT_\sigma(I(\aaa,\x)K(\aaa)[\x])$,
compute the cardinality of~$T$ and call it~$\mu$; then
compute the least common multiple of all the denominators  of the 
coefficients of the polynomials in~$G(\aaa,\x)$, and call it $d(\aaa)$;
finally, let ${\U_2 = \mathbb A^m_K\setminus \{\alfa \in 
\mathbb A^m_K \, |\, d(\alfa) \ne 0\}}$
and let $\U = \U_1\cap \U_2$.

\item[(7)] Return $\ \U_1$, $\U_2$, $\U$, $T$,  $\mu$.
\end{itemize}

\noindent This is an algorithm which returns $\U_1$ which is $I$-smooth,
$\U_2$ which is $I$-free, $\U$ which is $I$-optimal, $T$ which provides 
a basis as $K$-vector spaces of all the fibers over $\U_2$,
and $\mu$ which is the multiplicity of all the fibers over $\U_2$.
\end{corollary}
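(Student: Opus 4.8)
The plan is to check that the listed instructions terminate and that each returned object has the asserted property, essentially by gluing together Theorem~\ref{jacobian} and Proposition~\ref{flatness}. For termination: step~(1) is a determinant computation; step~(2) is an elimination ideal, obtained by one Gr\"obner basis computation in $K[\aaa,\x]$ for an ordering eliminating $\x$ followed by intersection with $K[\aaa]$, hence finite; step~(5) computes the reduced $\sigma$-Gr\"obner basis of $I(\aaa,\x)K(\aaa)[\x]$, which is a finite object because the family is generically zero-dimensional, so $K(\aaa)[\x]/I(\aaa,\x)K(\aaa)[\x]$ is a finite-dimensional $K(\aaa)$-vector space (as recalled in the proof of Proposition~\ref{flatness}) and Buchberger's algorithm over the field $K(\aaa)$ halts; steps~(3), (4), (6), (7) are immediate. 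So the procedure is an algorithm.

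Next I would treat $\U_1$. If $H=(0)$, then by Theorem~\ref{jacobian}(a) there is no $I$-smooth subscheme of $\mathbb A^m_K$, so the message returned in step~(3) is correct. If $H\ne(0)$, then the element $h(\aaa)$ chosen in step~(4) is a nonzero member of $H=J(\aaa,\x)\cap K[\aaa]$, and Theorem~\ref{jacobian}(b) says exactly that the open subscheme $\U_1$ defined by $h(\aaa)\ne 0$ is $I$-smooth; in particular $\U_1$ is dense and nonempty.

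Then I would apply Proposition~\ref{flatness} with the term ordering $\sigma$ fixed in step~(5): part~(a) gives that the open subscheme defined by $d(\aaa)\ne 0$, namely $\U_2$, is $I$-free, hence dense and nonempty; part~(b), with the argument in its proof, gives that over every point of $\U_2$ the residue classes of the power products in $T$ form a $K$-basis of the fiber, so $T$ is a common $K$-vector space basis of all fibers over $\U_2$ and $\mu=\#T$ is their common multiplicity. Finally, since $\mathbb A^m_K$ is irreducible, $\U=\U_1\cap\U_2$ is again a nonempty, hence dense, Zariski-open subscheme of $\Sc$; over $\U$ the morphism $\Phi^{-1}(\U)\To\U$ is simultaneously smooth and free, being the restriction of $\Phi^{-1}(\U_1)\To\U_1$ and of $\Phi^{-1}(\U_2)\To\U_2$ respectively, so by Definition~\ref{ioptimal} it is $I$-optimal. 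This establishes all the claims.

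The only genuinely delicate point is the justification that the reduced Gr\"obner basis over $K(\aaa)[\x]$ in step~(5) is a finite, effectively computable object whose leading-term complement $T$ is finite; this is precisely where generic zero-dimensionality of the family enters, and it was already handled in the proof of Proposition~\ref{flatness}. Everything else is bookkeeping: matching the output to Definitions~\ref{iflat}, \ref{ismooth} and~\ref{ioptimal}, and using irreducibility of affine space to keep the intersected locus dense.
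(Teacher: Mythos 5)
Your proof is correct and follows the same route as the paper, which simply states that the result follows by combining Theorem~\ref{jacobian} and Proposition~\ref{flatness}; you spell out the bookkeeping (termination of each step, density of $\U_1\cap\U_2$ via irreducibility of $\mathbb A^m_K$, and the match with Definitions~\ref{iflat}, \ref{ismooth}, \ref{ioptimal}) that the paper leaves implicit.
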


\begin{proof}
It suffices to combine Theorem~\ref{jacobian} and 
Proposition~\ref{flatness}.
\end{proof}

\goodbreak

\begin{example}
We consider the ideal ${I = (f_1, f_2)}$ of $K[x,y]$
where~$f_1 = xy-6$, $f_2 = x^2+y^2-13$.
It is a zero-dimensional complete intersection and we 
embed it into the family~$I(\aaa,\x) = (a_1xy+a_2,\  a_3x^2+a_4y^2+a_5)$. 
We compute the reduced {\tt DegRevLex}-Gr\"obner basis 
of $I(\aaa,\x)K(\aaa)[\x]$ and 
get
$$
\{x^2+\tfrac{a_4}{a_3}y^2+\tfrac{a_5}{a_3},\ \   xy + \tfrac{a_2}{a_1},\  \ 
y^3- \tfrac{a_2a_3}{a_1a_4}x + \tfrac{a_1a_5}{a_1a_4}y\}
$$
according to the above results, a free locus is given by $a_1a_3a_4 \ne 0$.
Now we compute  $D(\aaa,\x)=\det(\Jac_F(\aaa, \x))$
and get $D(\aaa,\x) = -2a_1a_3x^2+2a_1a_4y^2$.

We let $J (\aaa,\x)= I(\aaa,\x) +(D(\aaa,\x) )$ and 
compute $J(\aaa,\x)\cap K[\aaa]$.
We get  the principal ideal generated 
by~${a_2^2}^{\mathstrut}a_3a_4 -\tfrac{1}{4}a_1^2a_5^2$.
In conclusion, an $I$-optimal subscheme
is~$\U=\mathbb A^5_K \setminus F$ where $F$ is the closed 
subscheme defined by the 
equation~$\ a_1a_3a_4({a_2^2}^{\mathstrut}a_3a_4 
-\tfrac{1}{4}a_1^2a_5^2)=0$, and $\mu = 4$.

\end{example}

\goodbreak
\begin{definition}\label{realpoints}
We say that  {\bf a point is complex} if its coordinates 
are complex numbers, and we say that {\bf a point is real} 
if its coordinates are real numbers.

\end{definition}

The following example illustrates the fact that even
if we start with a set of real points,  a zero-dimensional 
complete intersection which contains them may also 
contain complex non-real points.

\begin{example}
Let $\X$ be the set of the $10$ real points
$\{(-1,-1),\, \! (2, 8),\,\!  (-2,\!-8),\\
(3,27),\,  (-3,-27),\,  (4,64),\,
(5,125),\,  (-5,-125),\,  (6,216),\,  (-6,-216)\}$.
A zero-dim\-ensional complete intersection
containing~$\X$ is $\{f_1, f_2\}$
where $f_1 = y-x^3$ and
$f_2 = x^2y^2 - 1/4095y^4 + 1729/15x^2y - 74/15xy^2 + 1/15y^3 -
8832/5x^2 + 5852/15xy - 10754/315y^2
+ 2160x - 4632/5y + 250560/91$.
Let $I$ denote the vanishing ideal of the $10$ points
and let $J$ denote the ideal generated by $\{f_1, \, f_2\}$.
The colon ideal~$J:I$ defines the residual
intersection. Since $J$ is the intersection of a cubic and a
quartic curve, the residual intersection is a zero-dimensional
scheme of multiplicity~2. Indeed, a computation
(performed with \cocoa) shows that
$J:I $ is generated by $(x+1/78y-87/26,\, y^2-756y+658503)$.
Since $756^2 - 4*658503 = -2062476 <0$, the two
extra points on the zero-dimensional complete
intersection are complex, non real points.
\end{example}

\begin{theorem}\label{sturm}
Let $\f(\x)$ be a zero-dimensional complete intersection 
in~$\R[\x]$ and let $\f(\aaa,\x) \in \R[\aaa,\x]$ be a zero-dimensional family 
containing $\f(\x)$. Let~$I$ be the ideal in~$\R[\x]$ generated by $\f(\x)$,
assume that there exists an $I$-optimal subscheme~$\U$ 
of~$\mathbb A^m_\R$, and let~$\alfa_I\in \U$ be the point in the parameter 
space which corresponds to~$I$. If $\mu_{\R,I}$ is the number of distinct real 
points in the fiber over~$\alfa_I$ (i.e.\ zeroes of~$I$),
then there exist an open semi-algebraic subscheme $\V$ of~$\U$ such that
for every $\alfa\in \V$ the number of real points in the fiber over~$\alfa$ 
is $\mu_{\R,I}$.
\end{theorem}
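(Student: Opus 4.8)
The plan is to control the number of real zeros of each fiber by Hermite's trace form and to let that form vary over~$\U$. Since $\U$ is $I$-optimal, Proposition~\ref{flatness} shows that $B := \R[\aaa]_{d(\aaa)}[\x]/I(\aaa,\x)^e$ is a free module over $R := \R[\aaa]_{d(\aaa)}$ with the residue classes of the elements of~$T$ as a basis, and that for each $\alfa\in\U$ evaluating $\aaa$ at $\alfa$ specializes $B$ to the fiber algebra $A_\alfa := \R[\x]/I_\alfa$, again with the classes of~$T$ as an $\R$-basis, so that $\dim_\R A_\alfa = \mu := \#T$. First I would introduce the relative trace form $Q\colon B\times B\to R$, $Q(b,b') := \mathrm{Tr}_R(m_{bb'})$, where $m_h$ denotes multiplication by~$h$ on the free $R$-module~$B$; with respect to the basis~$T$ it is a symmetric $\mu\times\mu$ matrix $M(\aaa)$ whose entries lie in~$R$, i.e.\ are rational functions of~$\aaa$ with denominator a power of $d(\aaa)$, and in particular are continuous semi-algebraic functions on~$\U$.

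Next I would collect the classical facts about the specialized matrix $M(\alfa)$ for $\alfa\in\U$. Because the formation of the multiplication maps and of their traces is compatible with the evaluation map $R\to\R$ at~$\alfa$ — this is where the freeness of $B$ over $R$ is used — the matrix $M(\alfa)$ represents the trace form of $A_\alfa$ in the basis~$T$; and, by Hermite's theorem, $\rank M(\alfa)$ equals the number of distinct complex zeros of $I_\alfa$, while $\mathrm{signature}\,M(\alfa)$ equals the number $\mu_{\R,\alfa}$ of distinct real zeros of~$I_\alfa$. Since $\U$ is moreover $I$-smooth, every fiber $A_\alfa$ is reduced with exactly $\mu$ distinct points, hence $\rank M(\alfa)=\mu$, equivalently $\det M(\alfa)\neq 0$, for \emph{all} $\alfa\in\U$; in particular $\mathrm{signature}\,M(\alfa_I)=\mu_{\R,I}$.

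The crux is then a continuity argument. The map $\alfa\mapsto M(\alfa)$ is rational, hence Euclidean-continuous, on~$\U$; the eigenvalues of the symmetric matrix $M(\alfa)$ are real and depend continuously on~$\alfa$, and none of them vanishes since $\det M(\alfa)\neq0$ throughout~$\U$; therefore no eigenvalue can change sign, and the number of positive (resp.\ negative) eigenvalues, hence the signature, is locally constant on~$\U$. Explicitly, writing $\chi_{M(\alfa)}(t)=t^\mu+c_1(\alfa)t^{\mu-1}+\dots+c_\mu(\alfa)$, the $c_i(\alfa)$ are polynomials in the entries of $M(\alfa)$ and thus semi-algebraic functions of~$\alfa$; since $c_\mu(\alfa)=\pm\det M(\alfa)\neq0$ on~$\U$ and $\chi_{M(\alfa)}$ has only real roots, Descartes' rule of signs recovers $\mathrm{signature}\,M(\alfa)$ from the sign pattern of $(1,c_1(\alfa),\dots,c_\mu(\alfa))$. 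I would then set $\V := \{\alfa\in\U : \mathrm{signature}\,M(\alfa)=\mu_{\R,I}\}$: this set is semi-algebraic (it is cut out of~$\U$ by such sign conditions), it is open (by the local constancy above it is a union of Euclidean-connected components of the open set~$\U$), it contains $\alfa_I$ and is therefore non-empty, and for every $\alfa\in\V$ the fiber over~$\alfa$ is reduced and has $\mathrm{signature}\,M(\alfa)=\mu_{\R,I}$ real points — exactly what is claimed.

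The step I expect to be the main obstacle is twofold: making the specialization of the relative trace form precise, and — more substantially — exploiting $I$-smoothness to guarantee that $M(\alfa)$ stays nonsingular on all of~$\U$, since this nondegeneracy is precisely what upgrades the a priori only semicontinuous signature to a locally constant invariant. (If the $I$-smooth, hence $I$-optimal, hypothesis were dropped, $\rank M(\alfa)$ could drop along a proper subvariety of~$\U$ and the count of real zeros would genuinely jump there.) A lesser technical point is the semi-algebraic bookkeeping of the signature, which is routine real algebra (Hermite forms, Descartes' rule or Jacobi's criterion), and is presumably the part for which the help of M.-F.~Roy and S.~Basu is acknowledged.
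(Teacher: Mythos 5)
Your proof is correct but takes a genuinely different route from the paper's. The paper reduces to the univariate case: after a linear change of coordinates it places the generic ideal $I(\aaa,\x)\R(\aaa)[\x]$ in $x_n$-normal position, so that by the Shape Lemma the reduced ${\tt Lex}$-Gr\"obner basis isolates a single square-free univariate polynomial $h_\aaa \in \R(\aaa)[x_n]$ whose specializations control the real-root count; it then invokes a root-isolation result (\cite{BPR}, Theorem~5.12) to produce an open semi-algebraic set on which each real root of $h_{\alfa_I}$ persists and conjugate pairs of non-real roots stay non-real. You replace that reduction by the multivariate Hermite trace form on the free $\R[\aaa]_{d(\aaa)}$-module $B$ furnished by Proposition~\ref{flatness}: $I$-smoothness forces $M(\alfa)$ to be nonsingular on all of~$\U$, and nondegeneracy together with continuity of the eigenvalues makes the signature --- hence, by the Hermite--Sylvester theorem, the real-zero count --- locally constant, with the semi-algebraic description of~$\V$ coming from sign conditions on the characteristic polynomial's coefficients. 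The trade-offs: the paper's approach needs the coordinate change and the Shape Lemma and outsources the continuity argument to~\cite{BPR}, while yours is coordinate-free and yields an explicit sign-condition description of~$\V$, at the cost of introducing the trace form and checking its compatibility with specialization (which your freeness argument correctly supplies). Both exploit $I$-optimality in the essential way, the paper through the square-freeness and degree-constancy of~$h_\alfa$, you through the nonvanishing of $\det M(\alfa)$ across~$\U$.
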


\begin{proof}
We consider the ideal $ \mathcal{I} = I(\aaa,\x)\R(\aaa)[\x]$. It is 
zero-dimensional and the field $\R(\aaa)$ is infinite. 
Since a linear change of coordinates does not change the problem,
we may assume that ~$\mathcal{I}$ is in  $x_n$-normal position
(see~\cite{KR1}, Section 3.7). Moreover, we have already 
observed (see Remark~\ref{varie}) that in 
Proposition~\ref{flatness} the choice of~$\sigma$ is arbitrary. 
We choose $\sigma ={\tt Lex}$ and hence the 
reduced ${\tt Lex}$-Gr\"obner basis
of $\mathcal{I}$ has the shape prescribed by the 
Shape Lemma (see~\cite{KR1} Theorem 3.7.25).
Therefore there exists  a univariate polynomial $h_\aaa \in \R(\aaa)[x_n]$ 
whose degree is the multiplicity of both the generic fiber and 
the fiber over~$\alfa_I$, which is the number  of 
complex zeros of~$I$. 
Due to the shape of the reduced Gr\"obner basis, a point 
is real if and only if its $x_n$-coordinate is real. Therefore 
it suffices to prove the following statement: 
given a univariate square-free polynomial~$h_\aaa \in \R(\aaa)[x_n]$ 
such that~$h_{\alfa_I}$ has exactly~$\mu_{\R,I}$ real roots, 
there exists an open 
semi-algebraic subset of~$A^m_\R$ such that 
for every point $\alfa$ in it, the polynomial $h_\alfa$ has 
exactly~$\mu_{\R,I}$ real roots. This statement follows 
from~\cite{BPR}, Theorem 5.12 where it is shown that for every root
there exists an open semi-algebraic set in~$A^m_\R$ 
which isolates the root. Since complex non-real roots have to occur 
in conjugate pairs, this implies that real roots stay real.
\end{proof}

Let us see some examples.

\begin{example}
We consider the ideal $I = (xy-2y^2+2y,\  x^2-y^2-2x)$ in~$\R[x,y]$,
and we embed it into the family~$I(\aaa,\x) = (xy-ay^2+ay,\  x^2-y^2-2x)$.
We compute the reduced {\tt Lex}-Gr\"obner 
basis of~$I(\aaa,\x)\R(\aaa)[\x]$ and get
$$\{x^2 - 2x - y^2,\ xy - ay^2 + ay, \ 
y^3 - \tfrac{2a}{a-1}y^2  + \tfrac{a^2+2a}{a^2-1}y\}
$$
Applying the algorithm illustrated in Corollary~\ref{algo-optimal}
we get an $I$-smooth subscheme of~$\mathbb A^1_\R$  
for $a(a+2) \ne 0$, and an $I$-free subscheme for $(a-1)(a+1) \ne 0$.
For $a$ different from $0, -2,\  1, -1$ we have an $I$-optimal subscheme
and the multiplicity is~$4$. 

\goodbreak

Our ideal $I$ is obtained for $a = 2$, 
and hence it  lies over the optimal subscheme. It has multiplicity $4$ 
and the four zeros are real. 

The computed {\tt Lex}-Gr\"obner basis  does not have the 
shape prescribed by the Shape Lemma, so we 
perform a linear change of coordinates by setting $x = x+y,\  y=x-y$.
We compute the reduced {\tt Lex}-Gr\"obner basis and get 
$$
\{x + 4 \tfrac{a+1}{a-1}y^3 - 2 \tfrac{a+1}{a-1}y^2  - \tfrac{3a+1}{a-1}y, \ \
y^4 - y^3 -  \tfrac{1}{2}\tfrac{a}{a+1}y^2 + \tfrac{1}{2}\tfrac{a}{a+1}y\}
$$
It has the good shape, so we can use the 
polynomial
$$
h_{\aaa} = y^4 - y^3 -  \tfrac{1}{2}\tfrac{a}{a+1}y^2 + \tfrac{1}{2}\tfrac{a}{a+1}y
= y(y-1)(y^2- \tfrac{1}{2}\tfrac{a}{a+1})$$
We get the following result.
\begin{itemize}
\item For $a < -1,\  a\ne -2$ there are $4$ real points.  
\item For $-1<a<0$ there are $2$ real points.
\item For $a>0, \ a\ne 1$ there are $4$ real points.
\end{itemize}
To complete our analysis, let us see what happens 
at the {\it bad}\/ points $0, -2,\  1, -1$.

At $0$ the primary decomposition of the ideal $I_0$ is 
$(x-2,y)\cap(y^2 + 2x, xy, x^2)$, hence the
fiber consists in the simple point $(2,0)$ and a triple point at $(0,0)$.

At $-2$ we see that $(x+\tfrac{2}{3}, \ y-\tfrac{4}{3})\cap (x,y) \cap(x-2,y^2)$ 
is the primary decomposition of the ideal $I_{-2}$, and hence the 
fiber consists in the simple point $(-\tfrac{2}{3}, \tfrac{4}{3})$,  the simple 
point $(0,0)$ and a double point at $(2,0)$.

At $-1$ the primary decomposition of the ideal $I_{-1}$ 
is $(x,y) \cap(x-2,y)$, hence the fiber consists of the two simple 
real points $(0,0)$ and $(2,0)$.

At $1$  we see that $(x,y) \cap(x-2,y)\cap(x+\tfrac{1}{4},y-\tfrac{3}{4})$ 
is the primary decomposition of the ideal $I_{1}$, hence the fiber 
consists of the three simple  real 
points $(0,0)$, $(2,0)$, $(-\tfrac{1}{4}, \tfrac{3}{4})$.
\end{example}

\begin{example}\label{realroots}
We consider the ideal $I = (xy+1,\  x^2+y^2-5)$ in~$\R[x,y]$,
and we embed it into the family~$I(\aaa,x,y) = (xy+a_1x+1,\  x^2+y^2+a_2)$. 
We compute the reduced {\tt Lex}-Gr\"obner 
basis of~$I(\aaa,\x)K(\aaa)[x,y]$ and get $G(\aaa,x,y)=\{g_1,g_2\}$ where 
\begin{eqnarray*}
g_1 &=& x - y^3 - a_1y^2 - a_2y - a_1a_2,\\
g_2 &=& y^4 + 2 a_1y^3 + (a_1^2 + a_2)y^2 
+2 a_1 a_2y + (a_1^2a_2+1)
\end{eqnarray*}
which has the shape prescribed by the Shape Lemma 
(see~\cite{KR1} Theorem 3.7.25).
There is no condition for the free locus, and
$D(\aaa,x,y)=\det(\Jac_F(\aaa, x,y)) = -2x^2 + 2y^2 + 2a_1 y$.
We let $J (\aaa,x,y)= I(\aaa,x,y) +(D(\aaa,x,y) )$ and compute $J(\aaa,x,y)\cap K[\aaa]$.
We get  the principal ideal generated 
by the following polynomial ${h(\aaa)=a_1^6a_2 + 3a_1^4a_2^2 + a_1^4+ 3a_1^2a_2^3+ 20a_1^2a_2 + a_2^4
 - 8a_2^2 + 16}$.
An~$I$-optimal subscheme
is~$\ \U=\mathbb A^4_\R \setminus F$ where $F$ is the closed 
subscheme defined by the 
equation~$h(\aaa)=0$, and we observe that $\mu = 4$.

At this point we know that for $h(\aaa) \ne 0$ 
each fiber is smooth and has multiplicity~$4$, hence it consists of $4$ 
distinct complex points. What about real points? 

\begin{center}
\includegraphics[width = .8 \textwidth]{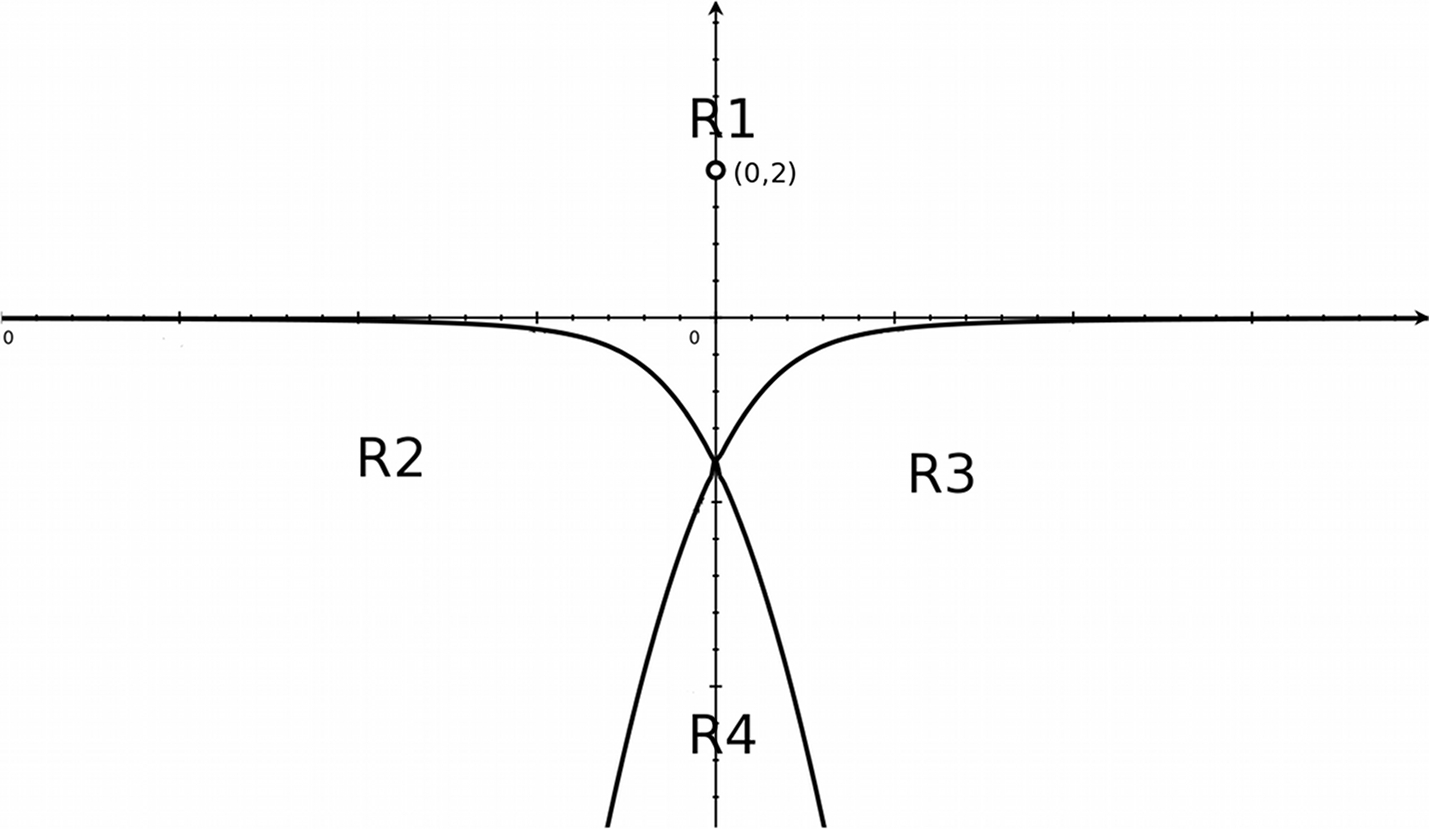}
\end{center}
The real curve defined by $h(\aaa)=0$ is shown in the above picture. 
It is the union of two branches and the isolated point $(0,2)$.
The upper region R1 (with the exception of the point $(0,2)$) 
corresponds to the ideals in the family whose zeros are four 
complex non-real points. The regions R2 and R3
correspond to the ideals whose zeros are  two complex 
non-real points and two real points.  
The region R4 corresponds to the ideals whose zeros 
are  four real points. 
To describe the four regions algebraically, we use 
the Sturm-Habicht  sequence (see~\cite{GLRR}) of~$g_2 \in \R(\aaa)[y]$. 
The leading monomials are
$y^4,\ 4y^3,\  4r(\aaa)y^2,\  -8\ell(\aaa)y,\  16h(\aaa)$ 
 where 
$r(\aaa) = a_1^2-2a_2,\  
\ell(\aaa) = a_1^4a_2+2a_1^2a_2^2+2a_1^2+a_2^3-4a_2$.
To get the total number of real roots we count the 
sign changes in the sequence at $-\infty$ and $+\infty$; 
in particular, we observe that in the parameter space  the ideal~$I$ 
corresponds to the point $(0,-5)$ which belongs to the region R4. 
We get
$$
{\rm R4} = \{\alfa \in \R^2\ | \  r(\alfa)>0, \ \ell(\alfa) <0,\  h(\alfa)>0\}
$$
which  is semi-algebraic open, not Zariski-open.
\end{example}

\goodbreak
\section{Condition Numbers}
\label{Condition Numbers}
In this section we introduce a notion of {\it condition number}\/ for
zero-dimensional smooth complete intersections in~$\R[\x]$; 
the aim is to give a measure of the sensitivity of
its real roots with respect to small perturbations of the input data,
that is  small changes of the coefficients of the involved polynomials.

The section starts with the recall of well-known facts
about numerical linear algebra.
We let $m,n$ be positive integers and let $\rm{Mat}_{m \times n}(\R)$
be the set of~$m \times n$ matrices with entries in $\R$; 
if $m=n$ we simply write $\rm{Mat}_{n}(\R)$.

\begin{definition}
Let $M=(m_{ij})$ be a matrix in $ \rm{Mat}_{m \times n}(\R)$, 
$v=(v_1, \dots, v_n)$ 
a vector in $\R^n$ and~$\| \cdot \|$ a vector norm.

\begin{itemize}
\item[(a)] Let $r \ge 1$ be a real number; the {\bf $r$-norm} 
on the vector space~$\R^n$ is defined by the formula 
$\|v\|_r = \left( \sum_{i=1}^n |v_i|^r \right)^{\frac{1}{r}} $ 
for every $v \in \R^n$.

\item[(b)]  The {\bf infinity norm} on $\R^n$ is defined by the formula
$\|v\|_\infty = {\rm max}_i |v_i|$.

\item[(c)] The {\bf spectral radius} $\rho(M)$ of the matrix $M$ 
is defined by the formula
$\rho(M) = \max_i |\lambda_i |$, where the $\lambda_i$ are 
the {\it complex}\/ eigenvalues of~$M$. 

\item[(d)] The real function defined on $\rm{Mat}_{m \times n}(\R) $ by
$M \mapsto \max_{\|v\|=1} \| Mv\|$
is a matrix norm called the {\bf matrix norm induced} by
$\| \cdot \|$. A matrix norm induced by a vector norm is
called an {\bf induced matrix norm}.

\item[(e)]  The matrix norm induced by $\|\cdot\|_1$ 
is given by the following formula
$\|M\|_1 = {\rm max}_j(\sum_i|m_{ij}|)$. 
The matrix norm induced by $\|\cdot\|_\infty$ is given 
by the formula
$\|M\|_\infty = {\rm max}_i(\sum_j |m_{ij}|)$. 
Finally, the matrix norm induced by $\|\cdot\|_2$ 
is given by the formula
$\|M\|_2 = {\rm max}_i(\sigma_i)$ where the $\sigma_i$ 
are singular values of~$M$.
\end{itemize}
\end{definition}

If no confusion arises, from now on we will use the 
symbol $\| \cdot \|$ to denote both a vector norm and a matrix norm.
We recall some facts about matrix norms 
(see for instance~\cite{BCM},~\cite{H96}).

\begin{proposition}\label{lemmaInverseNorm}
Let $M$ be a matrix in $\rm{Mat}_n(\R)$, let $I$ be the identity 
matrix of type $n$ and let~$\|\cdot \|$ be an induced matrix 
norm on $\rm{Mat}_n(\R)$.
If the matrix $I+M$ is  invertible then  $(1 - \|M\|) \: \|(I+M)^{-1}\| \leq 1$.
\end{proposition}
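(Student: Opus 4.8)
The plan is to reduce the claimed inequality to the submultiplicativity of an induced matrix norm together with the triangle inequality. The starting observation is the standard algebraic identity $(I+M)^{-1} = I - M(I+M)^{-1}$, which one verifies by multiplying both sides on the left by $I+M$; this is legitimate precisely because $I+M$ is assumed invertible, so $(I+M)^{-1}$ exists. Taking norms on both sides and applying the triangle inequality gives
$$
\|(I+M)^{-1}\| \;\le\; \|I\| + \|M(I+M)^{-1}\|.
$$
Since the norm is an induced matrix norm, we have $\|I\| = 1$ (an induced norm of the identity equals $\max_{\|v\|=1}\|v\| = 1$), and submultiplicativity of induced norms gives $\|M(I+M)^{-1}\| \le \|M\|\,\|(I+M)^{-1}\|$. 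Hence
$$
\|(I+M)^{-1}\| \;\le\; 1 + \|M\|\,\|(I+M)^{-1}\|.
$$

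Rearranging, $(1 - \|M\|)\,\|(I+M)^{-1}\| \le 1$, which is exactly the assertion. Note that this inequality is informative only when $\|M\| < 1$; if $\|M\| \ge 1$ the left-hand side is nonpositive (as $\|(I+M)^{-1}\| \ge 0$) and the bound holds trivially, so no case distinction is actually needed in the final statement as phrased.

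The only point requiring minor care — and the step I would flag as the "obstacle," though it is quite mild — is making sure the two properties invoked (namely $\|I\|=1$ and submultiplicativity) are genuinely available. Both are standard features of induced matrix norms and are implicit in the definition recalled just before the proposition; submultiplicativity follows from $\|ABv\| \le \|A\|\,\|Bv\| \le \|A\|\,\|B\|\,\|v\|$ for every $v$, and $\|I\|=1$ is immediate from the defining formula $M\mapsto\max_{\|v\|=1}\|Mv\|$. With those in hand the proof is a two-line computation, so there is no substantial difficulty; the value of the statement lies in its role as a lemma for perturbation bounds on inverses, not in the argument itself.
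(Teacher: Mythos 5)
Your proof is correct. The paper does not actually prove this proposition; it states it as a recalled fact with a pointer to the references \cite{BCM} and \cite{H96}, so there is no in-paper argument to compare against. Your derivation is the standard textbook one: write $(I+M)^{-1} = I - M(I+M)^{-1}$ (equivalently $I - (I+M)^{-1}M$, obtained by multiplying $(I+M)^{-1}(I+M)=I$ out and rearranging), take norms, use $\|I\|=1$ and submultiplicativity, and rearrange. One tiny point of hygiene: the identity in the form you wrote it, with $M$ on the left of $(I+M)^{-1}$, uses that $M$ commutes with $(I+M)^{-1}$ (which it does, since $M$ commutes with $I+M$); the form $I-(I+M)^{-1}M$ avoids even that remark. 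Your observation that the bound is vacuous when $\|M\|\ge 1$ is also correct and explains why no hypothesis $\|M\|<1$ is needed in the statement.
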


\begin{proposition} \label{classicalIneq2}
Let $M \in \rm {Mat}_{m \times n}(\R)$ and denote by
$M_i$ the $i$-th row of~$M$.
Let $r_1 \ge 1, r_2 \ge 1$ be real numbers such that
$\frac{1}{r_1}+\frac{1}{r_2}=1$; then
\begin{eqnarray*}
\max_i \|M_i\|_{r_2} \le \|M\|_{r_1} \le m^{1/r_1} \max_i \|M_i\|_{r_2}
\end{eqnarray*}
In particular, for $r_1=r_2=2$
\begin{eqnarray*}
\max_i \|M_i\|_2 \le \|M\|_2 \le \sqrt{m} \max_i \|M_i\|_2
\end{eqnarray*}
\end{proposition}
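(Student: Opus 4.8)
The statement splits naturally into two inequalities, and the plan is to handle each by unwinding the definition of the induced $r_1$-norm $\|M\|_{r_1}=\max_{\|v\|_{r_1}=1}\|Mv\|_{r_1}$ and applying H\"older's inequality rowwise. For the lower bound $\max_i\|M_i\|_{r_2}\le\|M\|_{r_1}$, first I would observe that for any fixed row index $i$, $\|M_i\|_{r_2}=\max_{\|v\|_{r_1}=1}|M_i\cdot v|$ by the standard duality between the $r_1$- and $r_2$-norms (this is precisely the equality case of H\"older). Pick a unit vector $v^{(i)}$ realizing this maximum; then $\|Mv^{(i)}\|_{r_1}\ge |M_i\cdot v^{(i)}| = \|M_i\|_{r_2}$ since the $i$-th coordinate of $Mv^{(i)}$ already has that absolute value and all other coordinates only add to the $r_1$-norm. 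Taking the maximum over $i$ and over all unit $v$ gives $\max_i\|M_i\|_{r_2}\le\|M\|_{r_1}$.

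For the upper bound, fix an arbitrary $v$ with $\|v\|_{r_1}=1$. Then $\|Mv\|_{r_1}^{r_1}=\sum_{i=1}^m |M_i\cdot v|^{r_1}$, and H\"older gives $|M_i\cdot v|\le \|M_i\|_{r_2}\|v\|_{r_1}=\|M_i\|_{r_2}\le \max_j\|M_j\|_{r_2}$ for each $i$. Hence $\|Mv\|_{r_1}^{r_1}\le m\,(\max_j\|M_j\|_{r_2})^{r_1}$, i.e.\ $\|Mv\|_{r_1}\le m^{1/r_1}\max_j\|M_j\|_{r_2}$; taking the supremum over unit $v$ yields $\|M\|_{r_1}\le m^{1/r_1}\max_i\|M_i\|_{r_2}$. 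The special case $r_1=r_2=2$ is then immediate by substitution, noting $\tfrac12+\tfrac12=1$ and $2^{1/2}=\sqrt m$ with $m$ the number of rows.

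I do not anticipate a genuine obstacle here: the whole argument is H\"older's inequality together with its sharpness (the existence of the dual vector attaining equality), which is entirely classical. The only points requiring a little care are (i) making sure the dual exponent convention matches ($1/r_1+1/r_2=1$, with the usual conventions when one exponent is $1$ and the other $\infty$, though the statement as written assumes $r_1,r_2\ge 1$ finite, so one may silently restrict to that case), and (ii) being explicit that the maximum defining the induced norm is attained, so that ``$\max$'' rather than ``$\sup$'' is legitimate — this holds because the unit sphere in $\R^n$ is compact and $v\mapsto\|Mv\|_{r_1}$ is continuous. Beyond that, the proof is a two-line invocation of H\"older in each direction, and I would present it as such rather than belaboring the estimates.
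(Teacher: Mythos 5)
Your argument is correct: the lower bound follows from $r_1$--$r_2$ duality (H\"older's inequality together with the existence of a unit vector attaining it) applied to each row, and the upper bound follows from H\"older applied rowwise to $\|Mv\|_{r_1}^{r_1}=\sum_i |M_i\cdot v|^{r_1}$ together with the crude estimate $\sum_i \le m\max_i$. The paper itself offers no proof of this proposition — it is stated as a recalled classical fact with a pointer to \cite{BCM} and \cite{H96} — so there is no ``paper approach'' to compare against, but yours is the standard and expected one. One small slip: in the final sentence you write ``$2^{1/2}=\sqrt m$''; you of course mean $m^{1/r_1}=m^{1/2}=\sqrt m$.
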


This introductory part ends with the recollection 
of some facts about the polynomial ring~$K[\x]$. 
In particular, given $\eta=(\eta_1,\ldots,\eta_n) \in \N^n$ 
we denote by~$|\eta|$ the number $\eta_1+\ldots+\eta_n$, 
by $\eta!$ the number~$\eta_1! \ldots \eta_n!$,  
and by $\x^\eta$ the power 
product $x_1^{\eta_1} \ldots x_n^{\eta_n}$.

\begin{definition}
Let $p$ be a point of~$K^n$; the $K$-linear map 
on~$K[\x]$ defined by $f \mapsto f(p)$ is called 
the {\bf evaluation map} associated to~$p$ and 
denoted by ${\rm ev}_{p}(f)$.
\end{definition}

\begin{definition}\label{defTaylor}
Let $d$ be a nonnegative integer, let $r \ge 1$ be a real number, 
let~$p$ be a point of~$\R^n$ and let $g(\x)$ 
be a polynomial in~$\R[\x]$.

\begin{itemize}
\item[(a)] The formal Taylor expansion of $g(\x)$ at~$p$ 
is given by the following expression:
$
g(\x) = \sum_{|\eta| \ge 0} \frac{1}{\eta!} 
\frac{\partial^\eta g}{\partial \x^\eta}(p) (\x-p)^\eta
$.
\item[(b)] The polynomial $\sum_{|\eta| \ge d} \frac{1}{\eta!} 
\frac {\partial^\eta g} {\partial \x^\eta}(p) (\x-p)^\eta$ 
is denoted by $g^{\ge d}(\x,p)$. 

\item[(c)] The {\bf  $r$-norm of $g(\x)$ at $p$} is 
defined as the $r$-norm of the 
vector $\frac{\partial g}{\partial \x}(p)$.
If $\|\frac{\partial g}{\partial \x}(p)\|_r=1$ then~$g(\x)$ 
is called {\bf unitary at~$p$}.
\end{itemize}
\end{definition}

We use the following formulation of Taylor's theorem.

\begin{proposition}\label{propTaylor}
Let $p$ be a point of~$\R^n$ and let $g(\x)$ be a 
polynomial in~$\R[\x]$. 
For every point $q \in \R^n$ we have
$$
g(q)=  g(p) + \Jac_g(p)(q-p) +  \frac{1}{2} (q-p)^t H_g(\xi) (q-p)
$$
where $\xi$ is a point of the line connecting~$p$ to~$q$ 
and $H_g(\xi)$ is the Hessian matrix of~$g$ at~$\xi$. 
\end{proposition}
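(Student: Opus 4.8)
The plan is to reduce the statement to the one-variable Taylor theorem with Lagrange remainder by restricting $g$ to the line segment joining $p$ and $q$. Concretely, I would introduce the auxiliary function $\phi(t) = g\big(p + t(q-p)\big)$ for $t$ in the closed interval $[0,1]$. Since $g$ is a polynomial and the map $t \mapsto p + t(q-p)$ is affine, $\phi$ is a polynomial in the single variable $t$, hence of class $C^\infty$ on all of $\R$; in particular no regularity hypothesis beyond the polynomiality of $g$ is needed, and the univariate Taylor theorem applies on the whole closed interval $[0,1]$.

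Next I would compute the first two derivatives of $\phi$ by the chain rule. Writing $v = q-p$ for the direction vector, one gets $\phi'(t) = \sum_{i=1}^n \frac{\partial g}{\partial x_i}\big(p + tv\big)\, v_i = \Jac_g(p+tv)\, v$ and, differentiating once more, $\phi''(t) = \sum_{i,j=1}^n \frac{\partial^2 g}{\partial x_i \partial x_j}\big(p+tv\big)\, v_i v_j = v^t H_g(p+tv)\, v$, where $H_g$ denotes the Hessian matrix of $g$. Evaluating at the endpoints gives $\phi(0) = g(p)$, $\phi(1) = g(q)$, and $\phi'(0) = \Jac_g(p)\, v = \Jac_g(p)(q-p)$.

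Then I would apply the classical univariate Taylor formula with Lagrange remainder of order two to $\phi$ on $[0,1]$: there exists $\tau \in (0,1)$ with $\phi(1) = \phi(0) + \phi'(0) + \tfrac{1}{2}\phi''(\tau)$. Setting $\xi = p + \tau(q-p)$ — a point lying on the segment connecting $p$ to $q$ — and substituting the expressions computed in the previous step yields exactly $g(q) = g(p) + \Jac_g(p)(q-p) + \tfrac{1}{2}(q-p)^t H_g(\xi)(q-p)$, which is the claim.

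There is no genuine obstacle here: the argument is a routine reduction, and the only points deserving a line of care are the verification of the chain-rule identities for $\phi'$ and $\phi''$ and the remark that $\phi$, being a polynomial in $t$, automatically satisfies the differentiability requirements of the one-variable Taylor theorem. One could alternatively argue directly from the multivariate formal Taylor expansion recalled in Definition~\ref{defTaylor}(a) by isolating the term $g^{\ge 2}(\x,p)$ and writing its remainder in integral form, but the reduction to one variable is the cleanest route and is the one I would follow.
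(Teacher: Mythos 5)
Your proof is correct: the paper states this proposition as a recalled classical formulation of Taylor's theorem and gives no proof of its own, and your reduction to the univariate Taylor theorem with Lagrange remainder via $\phi(t)=g\bigl(p+t(q-p)\bigr)$ is exactly the standard argument one would supply. The chain-rule computations of $\phi'$ and $\phi''$ and the identification $\xi=p+\tau(q-p)$ are all accurate, so nothing is missing.
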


Given  $\f(\x)=\{f_1(\x),\ldots,f_n(\x)\}$, a 
zero-dimensional smooth complete intersection in~$\R[\x]$, we 
introduce a notion of admissible 
perturbation of~$\f(\x)$. Roughly speaking, the polynomial 
set $\bm \varepsilon(\x)=\{\varepsilon_1(\x),\ldots,
\varepsilon_n(\x)\} \subset \R[\x]$ is considered to be an 
admissible perturbation of~$\f(\x)$ if the real solutions 
of $(\f +  \bm \varepsilon)(\x)=0$ are nonsingular and derive 
from perturbations of the real solutions of $\f(\x)=0$. 
Using the results of Section~\ref{Families of Complete Intersections} 
we formalize this concept as follows.

\begin{definition}\label{admissible}
Let $\f(\x)=\{f_1(\x),\ldots,f_n(\x)\}$ be a zero-dimensional 
smooth complete intersection 
in~$\R[\x]$, let~$\mu_{\R,I}$ be the 
number of real solutions of $\f(\x)=0$, and 
let $\bm \varepsilon(\x)=
\{\varepsilon_1(\x),\ldots,\varepsilon_n(\x)\}$ be a set of
polynomials in $\R[\x]$.
Suppose that the assumptions of Theorem~\ref{sturm} 
are satisfied, let $\mathcal V \subset \mathbb A_\R^m$ 
be an open semi-algebraic subset of~$\U$ 
such that $\alfa_I \in \V$, and for every $\alfa \in \V$ the 
number of real roots of ${\f(\alfa,\x)=0}$ is equal to~$\mu_{\R,I}$.
If there exists  $\alfa \in \V$ such  
that $(\f+\bm \varepsilon)(\x)=\f(\alfa, \x)$, 
then $\bm \varepsilon(\x)$ is called 
an {\bf admissible perturbation} of~$\f(\x)$ .
\end{definition}

Henceforth we 
let $ \bm \varepsilon(\x)=\{\varepsilon_1(\x),\ldots,\varepsilon_n(\x)\}$ 
be an admissible perturbation of~$\f(\x)$, and let
$\mathcal Z_\R(\f)=\{p_1,\ldots, p_{\mu_{\R,I}}\}$,
$\mathcal Z_\R(\f + \bm \varepsilon)=\{r_1,\ldots,r_{\mu_{\R,I}}\}$ be
the sets of real solutions of $\f(\x)=0$ 
and $(\f +  \bm \varepsilon)(\x)=0$ respectively. 
We consider each~$r_i$ as a perturbation 
of the root~$p_i$, hence we 
write $r_i=p_i + \Delta p_i$ for $i=1, \dots,\mu_{\R,I}$.

\medskip

Now we concentrate on a single element $p$ of $\mathcal Z_\R(\f)$.

\begin{corollary}\label{taylorsystem}
Let $p$ be one of the real solutions of $\f = 0$, and 
$p + \Delta p$ the 
corresponding real solution of $\f+ \bm \varepsilon = 0$. The we have
\begin{eqnarray}\label{firstOrderApprox}
0=(\f+ \bm \varepsilon)(p+ \Delta p) = 
\bm \varepsilon(p) + \Jac_{\f+ \bm \varepsilon}(p)\Delta p + 
\left( v_1(\xi_1), \ldots, v_n(\xi_n) \right)^t
\end{eqnarray}
where $\xi_1,\ldots,\xi_n$ are points on the line which
connects the points $p$ and $p + \Delta p$, and
$v_j(\xi_j) = \frac{1}{2} \Delta p^t H_{f_j+\varepsilon_j}(\xi_j)\Delta p$
for each~$j=1,\dots,n$. 
\end{corollary}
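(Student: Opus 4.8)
The plan is to obtain the displayed identity by applying the multivariate Taylor theorem (Proposition~\ref{propTaylor}) separately to each of the $n$ scalar polynomials $f_j(\x)+\varepsilon_j(\x)$, expanded at the point $p$ and evaluated at the point $q=p+\Delta p$, and then stacking the resulting $n$ scalar equations into one vector equation. Since $p+\Delta p$ is by hypothesis a real solution of $(\f+\bm\varepsilon)(\x)=0$, the left-hand side $(f_j+\varepsilon_j)(p+\Delta p)$ is zero for every $j$; this is what produces the leading $0$ in~\eqref{firstOrderApprox}.

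First I would fix an index $j$ and write out Proposition~\ref{propTaylor} for the polynomial $g=f_j+\varepsilon_j$, with the roles $p\mapsto p$ and $q\mapsto p+\Delta p$, so that $q-p=\Delta p$. This gives
\[
0=(f_j+\varepsilon_j)(p+\Delta p)=(f_j+\varepsilon_j)(p)+\Jac_{f_j+\varepsilon_j}(p)\,\Delta p+\tfrac12\,\Delta p^{\,t}H_{f_j+\varepsilon_j}(\xi_j)\,\Delta p,
\]
where $\xi_j$ is a point on the segment joining $p$ and $p+\Delta p$, as guaranteed by the proposition. Next I would use that $p$ is a solution of $\f=0$, hence $f_j(p)=0$, so that $(f_j+\varepsilon_j)(p)=\varepsilon_j(p)$. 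I would also note that the Jacobian row splits additively: $\Jac_{f_j+\varepsilon_j}(p)=\Jac_{f_j}(p)+\Jac_{\varepsilon_j}(p)$, which is exactly the $j$-th row of $\Jac_{\f+\bm\varepsilon}(p)$. Setting $v_j(\xi_j)=\tfrac12\,\Delta p^{\,t}H_{f_j+\varepsilon_j}(\xi_j)\,\Delta p$ then rewrites the scalar identity in the desired form.

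Finally I would assemble the $n$ scalar identities, for $j=1,\dots,n$, into a single column-vector identity in $\R^n$: the constant terms $\varepsilon_j(p)$ form the vector $\bm\varepsilon(p)$; the linear terms $\Jac_{f_j+\varepsilon_j}(p)\,\Delta p$ form $\Jac_{\f+\bm\varepsilon}(p)\,\Delta p$; and the quadratic remainders form the vector $(v_1(\xi_1),\dots,v_n(\xi_n))^t$. This yields precisely~\eqref{firstOrderApprox}, and the corollary follows at once from Proposition~\ref{propTaylor} together with the defining equations of $p$ and $p+\Delta p$.

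There is essentially no serious obstacle here; the statement is a bookkeeping reorganization of Taylor's theorem, and the only point requiring a word of care is that the intermediate points $\xi_j$ depend on $j$ (one cannot use a single mean-value point for all $n$ components simultaneously), which is why the remainder is written component-wise as $(v_1(\xi_1),\dots,v_n(\xi_n))^t$ rather than as a single quadratic form in one point $\xi$. Accordingly I would make sure the statement of Proposition~\ref{propTaylor} is invoked once per component, with its own $\xi_j$ on the segment from $p$ to $p+\Delta p$.
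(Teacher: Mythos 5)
Your argument is correct and is the same as the paper's: apply Proposition~\ref{propTaylor} componentwise to $(\f+\bm\varepsilon)(\x)$ with $q=p+\Delta p$, use $\f(p)=0$ to reduce $(\f+\bm\varepsilon)(p)$ to $\bm\varepsilon(p)$, and stack the $n$ scalar identities. Your remark that each component needs its own intermediate point $\xi_j$ is a useful clarification that the paper leaves implicit.
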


\begin{proof}
It suffices to put  $q=p+ \Delta p$, apply the formula 
of Proposition~\ref{propTaylor} to the 
polynomial system $(\f + \bm \varepsilon)(\x)$, and use the fact
that $\f(p) = 0$.
\end{proof}

\begin{example}\label{singularJacobian}
We consider the zero-dimensional smooth complete 
intersection $\f=\{f_1, f_2\}$ where $f_1=xy-6$, $f_2=x^2+y^2-13$ 
and observe that $\mathcal Z_\R(\f)=\{(-3,-2),(3,2), (-2,-3), (2,3)\}$.
The set $\f(\x)$ is embedded into the 
following family $F(\aaa,\x)=\{xy+a_1,x^2+a_2y^2+a_3\}$.
 
The semi-algebraic open set
 $$
 \V=\{\alfa \in \R^3 \:|\: \alpha_3^2-4\alpha_1^2 \alpha_2>0, 
\alpha_2>0, \alpha_3<0\}
$$
 is a subset of the $I$-optimal 
scheme  $\U = \{\alpha\in A_\R^3 | \ 
\alpha_2(\alpha_3^2-4\alpha_1^2 \alpha_2) \ne 0\}$. 
Moreover, it contains the 
point ${\alfa_I=(-6,\, 1, -13)}$, and the fiber over 
each $\alfa \in \V$ consists of~$4$ real points. 
The set $\bm \varepsilon(\x)=\{\delta_1, 
\delta_2y^2+\delta_3\}$, with $\delta_i \in \R$, is an 
admissible perturbation of~$\f(\x)$ if and only if the 
conditions $(\delta_3-13)^2-4(\delta_1-6)^2 (\delta_2+1)>0$, 
$\delta_2>-1$, and $\delta_3<13$ 
are satisfied.
Since the values $\delta_1=2$, $\delta_2=  \tfrac{5}{4}$, 
and $\delta_3=0$ satisfy the previous conditions, the polynomial 
set $\bm \varepsilon(\x)=\{2,\ \tfrac{5}{4}{y^2}^{\mathstrut}\}$ is an 
admissible perturbation of~$\f(\x)$. 
The real roots of $(\f + \bm \varepsilon)(\x)=0$ are 
\begin{eqnarray*}
\mathcal Z_\R(\f + \bm \varepsilon)=\left\{ \left(-3,-\tfrac{4}{3}\right), 
\left(3,\tfrac{4}{3}\right), (-2,-2), (2,2) \right \}
\end{eqnarray*}
For each $r_i \in \mathcal Z_\R(\f+ \bm \varepsilon)$ 
the matrix $\Jac_{\f+ \bm \varepsilon}(r_i)$ is invertible, 
as predicted by the theory.
On the contrary, by evaluating $\Jac_{\f+ \bm \varepsilon}(\x)$ 
at the third and the fourth point of $\mathcal Z_\R(\f)$ we 
obtain a singular matrix. This is an obstruction to the 
development of the theory  which suggests further restrictions
(see the following  discussion).
\end{example}

Our idea is to evaluate~$\Delta p$ using 
equation~(\ref{firstOrderApprox}) of Corollary~\ref{taylorsystem}. 
However,  while the assumption that~$\bm \varepsilon(\x)$ is an 
admissible perturbation of~$\f(\x)$ combined with the 
Jacobian criterion guarantee the non singularity of the 
matrix~$\Jac_{\f + \bm \varepsilon}(p + \Delta p)$, 
they do not imply the non singularity of the 
matrix $\Jac_{\f+ \bm \varepsilon}(p)$, as we have 
just seen in~Example~\ref{singularJacobian}. 
The next step is  to find a criterion which 
guarantees the non singularity 
of~$\Jac_{\f + \bm \varepsilon}(p)$.

\begin{lemma}\label{lemmaTau}
If $\|\Jac_\f(p)^{-1} \Jac_{ \bm \varepsilon}(p)\|< 1$ 
then $\Jac_{\f+ \bm \varepsilon}(p)$ is invertible.
\end{lemma}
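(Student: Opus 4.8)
The statement to prove is: if $\|\Jac_\f(p)^{-1}\Jac_{\bm\varepsilon}(p)\| < 1$, then $\Jac_{\f+\bm\varepsilon}(p)$ is invertible. The plan is to reduce the question about $\Jac_{\f+\bm\varepsilon}(p)$ to the setting of Proposition~\ref{lemmaInverseNorm}, which concerns perturbations of the identity matrix. First I would note that since $\f(\x)$ is a zero-dimensional \emph{smooth} complete intersection and $p$ is one of its solutions, the Jacobian criterion gives that $\Jac_\f(p)$ is invertible; hence $\Jac_\f(p)^{-1}$ exists and the hypothesis makes sense. The key algebraic manipulation is the factorization
\[
\Jac_{\f+\bm\varepsilon}(p) = \Jac_\f(p) + \Jac_{\bm\varepsilon}(p)
= \Jac_\f(p)\bigl(I + \Jac_\f(p)^{-1}\Jac_{\bm\varepsilon}(p)\bigr),
\]
which uses only the linearity of the Jacobian operator in the polynomial system.

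Next I would set $M := \Jac_\f(p)^{-1}\Jac_{\bm\varepsilon}(p)$, so the factorization reads $\Jac_{\f+\bm\varepsilon}(p) = \Jac_\f(p)(I+M)$. The hypothesis is exactly $\|M\| < 1$. Now I invoke the standard fact that for an induced matrix norm, $\|M\| < 1$ implies $I+M$ is invertible — this is the Neumann series argument, and it is essentially the content lurking behind Proposition~\ref{lemmaInverseNorm} (indeed, that proposition's inequality $(1-\|M\|)\,\|(I+M)^{-1}\|\le 1$ presupposes invertibility of $I+M$, and the same estimate shows why: if $(I+M)v=0$ for $v\neq 0$ then $v = -Mv$, so $\|v\| \le \|M\|\,\|v\| < \|v\|$, a contradiction). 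Therefore $I+M$ is invertible, and as a product of two invertible matrices, $\Jac_{\f+\bm\varepsilon}(p) = \Jac_\f(p)(I+M)$ is invertible.

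There is no real obstacle here: the lemma is a direct transcription, into the language of Jacobians of perturbed polynomial systems, of the classical observation that a matrix close to the identity (in an induced norm) is invertible. The only points requiring a word of care are (i) confirming that $\Jac_\f(p)$ is invertible in the first place, so that $M$ is well-defined — this is where smoothness of the complete intersection enters — and (ii) recalling that the relevant norm $\|\cdot\|$ is an \emph{induced} matrix norm, so that submultiplicativity and the estimate $\|Mv\|\le\|M\|\,\|v\|$ are available. Everything else is the one-line factorization above.
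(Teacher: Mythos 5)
Your proof is correct and follows essentially the same route as the paper: the key step in both is the factorization $\Jac_{\f+\bm\varepsilon}(p) = \Jac_\f(p)\bigl(I + \Jac_\f(p)^{-1}\Jac_{\bm\varepsilon}(p)\bigr)$, after which one shows $I+M$ is invertible from $\|M\|<1$. The only cosmetic difference is in that final micro-step — you argue directly via $v = -Mv \Rightarrow \|v\| \le \|M\|\,\|v\| < \|v\|$, whereas the paper bounds the spectral radius $\rho(M) \le \|M\| < 1$; both are standard and interchangeable.
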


\begin{proof}
By assumption $p$ is a nonsingular root of $\f(\x)=0$, 
hence $\Jac_\f(p)$ is invertible and 
so $\Jac_{\f +  \bm \varepsilon}(p) $ 
can be rewritten as
$
\Jac_{\f + \bm \varepsilon}(p) = 
\Jac_\f(p) + \Jac_{ \bm \varepsilon}(p) = \Jac_\f(p) 
\left( I + \Jac_\f(p)^{-1} \Jac_{ \bm \varepsilon}(p) \right)
$.
Consequently,  it suffices to show 
that the matrix ${I + \Jac_\f(p)^{-1} \Jac_{ \bm \varepsilon}(p)}$ 
is invertible.
And we achieve it by proving that the spectral 
radius~$\rho(\Jac_\f(p)^{-1} \Jac_{ \bm \varepsilon}(p))$ 
is smaller than 1. We have
$
\rho(\Jac_\f(p)^{-1} \Jac_{ \bm \varepsilon}(p)) 
\le \|\Jac_\f(p)^{-1} \Jac_{ \bm \varepsilon}(p)\| <1
$,
and the proof is now complete.
\end{proof}

Note that the 
requirement~$\|\Jac_\f(p)^{-1} \Jac_{ \bm \varepsilon}(p)\|< 1$ 
gives a restriction on the admissible choices 
of~$ \bm \varepsilon(\x)$, as we see in the following example.

\begin{example}{\bf (Example~\ref{singularJacobian} 
continued)}\label{exSingJacContinued}\\
Let $ \bm \varepsilon(\x)=\{\delta_1,\delta_2y^2+\delta_3\}$, 
with $\delta_i \in \R$, be an admissible perturbation of the 
zero-dimensional complete 
intersection~$\f(\x)$ of Example~\ref{singularJacobian}. 
We consider the real solution $p_4=(2,3)$ of $\f=0$ and 
compute $\|\Jac_\f(p_4)^{-1} \Jac_{\bm \varepsilon}(p_4)\|^2_2 
= \frac{117}{25}\delta_2^2$. From Lemma~\ref{lemmaTau} 
the condition $|\delta_2| < \frac{5}{39}\sqrt{13}$ is 
sufficient to have $\Jac_{\f+\bm \varepsilon}(p_4)$ invertible. 
\end{example}

From now on we assume that the hypothesis of 
Lemma~\ref{lemmaTau} is satisfied.
In order to deduce an upper bound 
for~$\|\Delta p\|$ we consider an approximation of it.

\begin{definition}
If  $\|\Jac_\f(p)^{-1} \Jac_{ \bm \varepsilon}(p)\|$ is different from $1$,
we denote the number ${1/(1-\|\Jac_\f(p)^{-1} \Jac_{ \bm \varepsilon}(p)\|)}$ 
by~$\Lambda(\f,\bm \varepsilon,p)$. Moreover, if  
equation~(\ref{firstOrderApprox}) is  truncated at the first order, 
we get the approximate 
solution~$- \Jac_{\f + \bm \varepsilon}(p)^{-1} \bm \varepsilon(p)$
which we call~$\Delta p^1$.
\end{definition}

\begin{proposition}
\label{estimateFirstOrderSolution}
Assume that ${\|\Jac_\f(p)^{-1} \Jac_{ \bm \varepsilon}(p)\| <1}$ and 
let $\|\cdot\|$ be an induced matrix norm. Then  we have
\begin{eqnarray}\label{DeltaP}
\|\Delta p^1\| \le \Lambda(\f,\bm \varepsilon,p) \;  
\|\Jac_\f(p)^{-1}\| \; \|\bm \varepsilon(p)\|
\end{eqnarray}
\end{proposition}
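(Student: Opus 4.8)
The plan is to start from the first-order truncation of equation~(\ref{firstOrderApprox}), namely the defining relation
$\Delta p^1 = - \Jac_{\f + \bm \varepsilon}(p)^{-1} \bm \varepsilon(p)$,
and estimate its norm by inserting the factorization of $\Jac_{\f+\bm\varepsilon}(p)$ already used in the proof of Lemma~\ref{lemmaTau}. First I would write
$\Jac_{\f+\bm\varepsilon}(p) = \Jac_\f(p)\bigl(I + \Jac_\f(p)^{-1}\Jac_{\bm\varepsilon}(p)\bigr)$,
which is legitimate since $p$ is a nonsingular root of $\f=0$, so that $\Jac_\f(p)$ is invertible, and since the hypothesis $\|\Jac_\f(p)^{-1}\Jac_{\bm\varepsilon}(p)\|<1$ guarantees (by Lemma~\ref{lemmaTau}, or directly by the Neumann series argument) that the second factor is invertible. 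Inverting the product gives
$\Jac_{\f+\bm\varepsilon}(p)^{-1} = \bigl(I + \Jac_\f(p)^{-1}\Jac_{\bm\varepsilon}(p)\bigr)^{-1}\Jac_\f(p)^{-1}$.

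Next I would take norms of the expression for $\Delta p^1$ and use submultiplicativity of the induced matrix norm (and the compatibility of the induced matrix norm with the underlying vector norm applied to $\bm\varepsilon(p)$):
\begin{eqnarray*}
\|\Delta p^1\| &=& \bigl\| \bigl(I + \Jac_\f(p)^{-1}\Jac_{\bm\varepsilon}(p)\bigr)^{-1}\Jac_\f(p)^{-1}\, \bm\varepsilon(p) \bigr\| \\
&\le& \bigl\| \bigl(I + \Jac_\f(p)^{-1}\Jac_{\bm\varepsilon}(p)\bigr)^{-1} \bigr\| \; \|\Jac_\f(p)^{-1}\| \; \|\bm\varepsilon(p)\|.
\end{eqnarray*}
It then remains to bound the first factor by $\Lambda(\f,\bm\varepsilon,p) = 1/(1-\|\Jac_\f(p)^{-1}\Jac_{\bm\varepsilon}(p)\|)$. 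This is exactly where Proposition~\ref{lemmaInverseNorm} enters: applying it with $M = \Jac_\f(p)^{-1}\Jac_{\bm\varepsilon}(p)$ (note $I+M$ is invertible, as just observed) yields $(1-\|M\|)\,\|(I+M)^{-1}\|\le 1$, and since $1-\|M\|>0$ by hypothesis we may divide to get $\|(I+M)^{-1}\|\le 1/(1-\|M\|) = \Lambda(\f,\bm\varepsilon,p)$. Substituting this bound into the chain of inequalities above gives precisely~(\ref{DeltaP}).

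There is no real obstacle here; the only point that needs a word of care is the justification that $I + \Jac_\f(p)^{-1}\Jac_{\bm\varepsilon}(p)$ is invertible, so that Proposition~\ref{lemmaInverseNorm} is applicable — but this was already established inside the proof of Lemma~\ref{lemmaTau} via the spectral-radius bound $\rho(M)\le\|M\|<1$, so I would simply cite that. Everything else is a routine application of submultiplicativity of induced matrix norms together with the standard Neumann-type estimate packaged in Proposition~\ref{lemmaInverseNorm}.
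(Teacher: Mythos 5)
Your proof is correct and follows exactly the paper's argument: factor $\Jac_{\f+\bm\varepsilon}(p)=\Jac_\f(p)\bigl(I+\Jac_\f(p)^{-1}\Jac_{\bm\varepsilon}(p)\bigr)$, invert, take norms via submultiplicativity, and bound the Neumann-type factor using Proposition~\ref{lemmaInverseNorm}. The only difference is that you spell out more explicitly the division by $1-\|M\|>0$ and the invertibility of $I+M$, which the paper leaves implicit.
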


\begin{proof}
Lemma~\ref{lemmaTau} guarantees that the 
matrix $\Jac_{\f +\bm \varepsilon}(p)$ is invertible, so
\begin{eqnarray*}
\Delta p^1 &=&  -  \Jac_{\f+\bm \varepsilon}(p)^{-1} \bm \varepsilon(p)
= -(\Jac_\f(p)+\Jac_{\bm \varepsilon}(p))^{-1} \bm \varepsilon(p) \\
&=& - \left ( I + \Jac_\f(p)^{-1} \Jac_{\bm \varepsilon}(p) \right )^{-1} 
\Jac_\f (p)^{-1} \bm \varepsilon(p) 
\end{eqnarray*}
We apply the inequality of Proposition~\ref{lemmaInverseNorm} 
to $\Jac_\f(p)^{-1} \Jac_{\bm \varepsilon}(p)$, and get
\begin{eqnarray*}
\|\Delta p^1\|  &\le& 
\|(I + \Jac_\f(p)^{-1} \Jac_{\bm \varepsilon}(p)^{-1})\| \; 
\|\Jac_\f(p)^{-1}\| \; \|\bm \varepsilon(p)\| \\
&\le& \Lambda(\f,\bm \varepsilon,p) \; 
\|\Jac_\f(p)^{-1}\| \; \|\bm \varepsilon(p)\|
\end{eqnarray*}
which concludes the proof.
\end{proof}

We  introduce the local condition 
number of the polynomial system $\f(\x)=0$.

\begin{definition}\label{LocalCondNumb}
Let $\f(\x)$ be a zero-dimensional smooth complete intersection 
in~$\R[\x]$, let~$p$ be a nonsingular real solution of $\f(\x)=0$, 
and let $\|\cdot\|$ be a norm.
\begin{itemize}
\item[(a)]
The number $\kappa(\f,p) = \|\Jac_\f(p)^{-1}\|  \|\Jac_\f(p)\|$ 
is called the {\bf local condition number} of~$\f(\x)$ at~$p$. 

\item[(b)] If the norm is an $r$-norm, the local condition 
number is denoted
by $\kappa_r(\f,p)$.
\end{itemize}
\end{definition}

The following theorem illustrates the importance of the 
local condition number.
It depends on $f$ and $p$, not on $\varepsilon$ and 
is a key ingredient to provide an upper bound for the relative 
error $\tfrac{\|\Delta p^1\|}{\|p\|}$.

\begin{theorem}{\bf (Local Condition Number)} \label{theoremCN}\\
Let $\|\cdot\|$ be an induced matrix norm; 
under the above assumptions and the 
condition $\|\Jac_\f(p)^{-1} \Jac_{ \bm \varepsilon}(p)\|< 1$ 
we have
\begin{eqnarray}\label{UB1}
\frac{\|\Delta p^1\|}{\|p\|} \le \Lambda(\f,\bm \varepsilon,p) \; \kappa(\f, p) 
\left(  \frac{\| \Jac_{\bm \varepsilon}(p)\|}{\|\Jac_\f(p)\|} + 
 \frac{\|\bm \varepsilon(0) - \bm \varepsilon^{\ge 2}(0,p)\|}
 {\|\f(0) - \f^{\ge 2}(0,p)\|} \right)
\end{eqnarray}
\end{theorem}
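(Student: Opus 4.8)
The plan is to start from the first-order truncation identity
$\Delta p^1 = -\Jac_{\f+\bm\varepsilon}(p)^{-1}\bm\varepsilon(p)$
and bound its norm using Proposition~\ref{estimateFirstOrderSolution}, which already gives
$\|\Delta p^1\| \le \Lambda(\f,\bm\varepsilon,p)\,\|\Jac_\f(p)^{-1}\|\,\|\bm\varepsilon(p)\|$.
Dividing by $\|p\|$, the whole task reduces to re-expressing the right-hand side in terms of the local condition number $\kappa(\f,p)=\|\Jac_\f(p)^{-1}\|\,\|\Jac_\f(p)\|$ together with two "relative perturbation" ratios. To do this I would multiply and divide by $\|\Jac_\f(p)\|$, turning $\|\Jac_\f(p)^{-1}\|$ into $\kappa(\f,p)/\|\Jac_\f(p)\|$, so that the estimate becomes
$\|\Delta p^1\|/\|p\| \le \Lambda(\f,\bm\varepsilon,p)\,\kappa(\f,p)\,\|\bm\varepsilon(p)\| / (\|p\|\,\|\Jac_\f(p)\|)$.
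It then remains to split the scalar $\|\bm\varepsilon(p)\|/(\|p\|\,\|\Jac_\f(p)\|)$ into the two summands appearing in (\ref{UB1}).

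The key algebraic step is the Taylor bookkeeping for $\bm\varepsilon(p)$ and for $\f$. Writing the formal Taylor expansion of each component at $p$ and evaluating at the origin $\x=0$, one gets
$\bm\varepsilon(0) = \bm\varepsilon(p) + \Jac_{\bm\varepsilon}(p)(0-p) + \bm\varepsilon^{\ge 2}(0,p)$,
using the notation $g^{\ge d}(\x,p)$ of Definition~\ref{defTaylor}(b) componentwise; hence
$\bm\varepsilon(p) = \bm\varepsilon(0) - \bm\varepsilon^{\ge 2}(0,p) + \Jac_{\bm\varepsilon}(p)\,p$.
Applying the triangle inequality and the submultiplicativity of the induced norm,
$\|\bm\varepsilon(p)\| \le \|\Jac_{\bm\varepsilon}(p)\|\,\|p\| + \|\bm\varepsilon(0) - \bm\varepsilon^{\ge 2}(0,p)\|$.
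The same identity applied to $\f$ (and using $\f(p)=0$) gives $\Jac_\f(p)\,p = \f^{\ge 2}(0,p) - \f(0)$, so $\|p\|\,\|\Jac_\f(p)\| \ge \|\Jac_\f(p)\,p\| = \|\f(0)-\f^{\ge 2}(0,p)\|$; I would use this to control the denominator, i.e.\ to bound $1/(\|p\|\,\|\Jac_\f(p)\|)$ by $1/\|\f(0)-\f^{\ge 2}(0,p)\|$ where needed, while keeping $\|p\|\,\|\Jac_\f(p)\|$ itself in the first summand. Combining these two estimates and regrouping yields exactly
$$
\frac{\|\bm\varepsilon(p)\|}{\|p\|\,\|\Jac_\f(p)\|} \le \frac{\|\Jac_{\bm\varepsilon}(p)\|}{\|\Jac_\f(p)\|} + \frac{\|\bm\varepsilon(0)-\bm\varepsilon^{\ge 2}(0,p)\|}{\|\f(0)-\f^{\ge 2}(0,p)\|},
$$
and substituting into the displayed bound for $\|\Delta p^1\|/\|p\|$ gives (\ref{UB1}).

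The main obstacle is not any single inequality but the correct splitting: one has to route the factor $\|p\|\,\|\Jac_\f(p)\|$ so that the $\Jac_{\bm\varepsilon}(p)\,p$ term produces the clean ratio $\|\Jac_{\bm\varepsilon}(p)\|/\|\Jac_\f(p)\|$ (with the $\|p\|$ cancelling) while the constant-term remainder $\bm\varepsilon(0)-\bm\varepsilon^{\ge 2}(0,p)$ is measured against the corresponding quantity for $\f$, namely $\f(0)-\f^{\ge 2}(0,p) = -\Jac_\f(p)\,p$. A secondary point requiring care is that the norm is an induced matrix norm, so that $\|\Jac_\f(p)\,p\| \le \|\Jac_\f(p)\|\,\|p\|$ is available and compatible with the vector norm used throughout; this is exactly why the hypothesis "$\|\cdot\|$ is an induced matrix norm" is stated. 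Once the bookkeeping is organized this way, everything else is the triangle inequality and Proposition~\ref{estimateFirstOrderSolution}, so I would present the proof as: recall the estimate of Proposition~\ref{estimateFirstOrderSolution}; insert $\|\Jac_\f(p)\|/\|\Jac_\f(p)\|$ to create $\kappa(\f,p)$; expand $\bm\varepsilon(p)$ and $\f(p)$ by Taylor at $p$ evaluated at $0$; apply the triangle inequality and submultiplicativity; and regroup.
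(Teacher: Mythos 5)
Your proposal is correct and follows essentially the same route as the paper: the same Taylor-at-$p$ identities evaluated at $0$ for $\bm\varepsilon$ and $\f$ (using $\f(p)=0$), the same triangle-inequality split of $\|\bm\varepsilon(p)\|$, and the same bound $\|\f(0)-\f^{\ge 2}(0,p)\|=\|\Jac_\f(p)\,p\|\le\|\Jac_\f(p)\|\,\|p\|$ to convert the remainder term. The only difference is cosmetic (you insert the factor $\|\Jac_\f(p)\|/\|\Jac_\f(p)\|$ at the start rather than at the end), which does not change the argument.
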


\begin{proof}
By Definition~\ref{defTaylor} the evaluation of $\bm \varepsilon$ 
at $0$ 
can be expressed in this way  $\bm \varepsilon(0)=
\bm \varepsilon(p) - \Jac_{\bm \varepsilon}(p)p+ 
\bm \varepsilon^{\ge 2} (0,p)$, and 
so $\bm \varepsilon(p) =\bm \varepsilon(0)+
\Jac_{\bm \varepsilon}(p)p-  \bm \varepsilon^{\ge 2} (0,p)$.
Dividing~(\ref{DeltaP}) of 
Proposition~\ref{estimateFirstOrderSolution} by~$\|p\|$ 
we obtain
\begin{eqnarray*}
\frac{\|\Delta p^1\|}{\|p\|}  &\le& 
\Lambda(\f,\bm \varepsilon,p) \; \|\Jac_\f(p)^{-1}\|  
\: \frac{\|\bm \varepsilon(p)\|}{\|p\|}\\
&\le& \Lambda(\f,\bm \varepsilon,p) 
\; \|\Jac_\f(p)^{-1}\|  \: 
\frac{\| \Jac_{\bm \varepsilon}(p)\| \|p\| + 
\|\bm \varepsilon(0) -  \bm \varepsilon^{\ge 2} (0,p)\|}{\|p\|}\\
& = & \Lambda(\f,\bm \varepsilon,p)  
\|\Jac_\f(p)^{-1}\|  \left ( \|\Jac_{\bm \varepsilon}(p)\| + 
\frac{\|\bm \varepsilon(0) -  
\bm \varepsilon^{\ge 2} (0,p)\|}{\|p\|} \right )
\end{eqnarray*}
Using again Definition~\ref{defTaylor} we 
express $\f(0)= \f(p) - \Jac_\f(p) p +  \f^{\ge 2} (0,p)$; 
since $\f(p)=0$ we have $\|\f(0) -  \f^{\ge 2} (0,p)\| = 
\|\Jac_\f(p) p\| \le \|\Jac_\f(p)\| \|p \|$ from which 
$$
\frac{1}{\|p\|} \le \frac{\|\Jac_\f(p)\|}{\|\f(0) -  
\f^{\ge 2} (0,p)\|}
$$ 
We combine the inequalities to obtain
\begin{eqnarray*}
\frac{\| \Delta p^1\|}{\|p\|}  &\le& 
\Lambda(\f,\bm \varepsilon,p)\|\Jac_\f(p)^{-1}\| 
\left ( \|\Jac_{\bm \varepsilon}(p)\| + \|\Jac_\f(p)\| 
\frac{\|\bm \varepsilon(0) -  
\bm \varepsilon^{\ge 2} (0,p)\|}{\|\f(0) -  \f^{\ge 2} (0,p)\|} \right )\\
&\le& \Lambda(\f,\bm \varepsilon,p)  
\|\Jac_\f(p)^{-1}\|  \|\Jac_\f(p)\|  
\left ( \frac{\|\Jac_{\bm \varepsilon}(p)\|}{\|\Jac_\f(p)\|} + 
\frac{\|\bm \varepsilon(0) -  
\bm \varepsilon^{\ge 2} (0,p)\|}{\|\f(0) -  \f^{\ge 2} (0,p)\|}   \right )
\end{eqnarray*}
and the proof is concluded.
\end{proof}

\goodbreak

The following remark contains observations 
about the local condition number.

\begin{remark}\label{remarkCN}
We call attention  to the following observations.
\begin{itemize}
\item[(a)] The notion of local condition number 
given in Definition~\ref{LocalCondNumb} is a 
generalization of the classical notion of condition 
number of linear systems (see~\cite{BCM}).
In fact, if $\f(\x)$ is linear, that 
is $\f(\x)=A \x-b$ with $A \in \rm{Mat}_n(\R)$ 
invertible, and $\mathcal Z_\R(\f)=\{p\}=\{A^{-1}b\}$, 
then $\kappa(\f,p)$ is the classical condition 
number of the matrix~$A$. In fact $\Jac_\f(\x)=A$, and so
$
\kappa(\f,p) = \|\Jac_\f(p)^{-1}\| \|\Jac_\f(p)\| =  \|A^{-1}\| \|A\|
$.
Further, if we consider the 
perturbation $\bm \varepsilon(\x) = \Delta A \x - \Delta b$, 
relation (\ref{UB1}) becomes 
$$ \label{classical}
\frac{\|\Delta p\|}{\|p\|} \le \frac{1}{1- \|A^{-1}\| \; 
\|\Delta A\|}  \|A^{-1}\| \; \|A\| \left(  \frac{\|\Delta A\|}{\|A\|} +  
\frac{\|\Delta b\|}{\|b\|}\right) \eqno {(4)}
$$
which is the relation that quantifies the sensitivity 
of the $Ax=b$ problem (see~\cite{BCM}, Theorem~4.1).

\item[(b)] Using any induced matrix norm, the 
condition number $\kappa(\f,p)$ turns out to 
be greater than or equal to $1$. 
In particular, using the $2$-norm we have 
$\kappa_2(\f,p)= 
\frac{\sigma_{\max}(\Jac_\f(p))}{\sigma_{\min}(\Jac_\f(p))}$; 
in this case the local condition number 
attains its minimum, that is $\kappa_2(\f,p)=1$, 
when $\Jac_\f(p)$ is orthonormal.

\item[(c)] The condition number $\kappa(\f,p)$ is 
invariant under a scalar multiplication of the
 polynomial system $\f(\x)$ by a unique 
 nonzero real number $\gamma$.
On the contrary, $\kappa(\f,p)$ is not invariant 
under a generic scalar multiplication of each 
polynomial~$f_j(\x)$ of~$\f(\x)$. 
The reason is that if we multiply each~$f_j(\x)$ 
by a nonzero real number $\gamma_j$ we 
obtain the new polynomial 
set ${\g(\x) = \{\gamma_1 f_1(\x),\ldots,\gamma_n f_n(\x)\}}$ 
whose condition number at $p$ is
\begin{eqnarray*}
\kappa(\g, p) = \|\Jac_\f(p)^{-1} \Gamma^{-1}\| \:
 \| \Gamma \Jac_\f(p) \| \neq \kappa(\f,p)
\end{eqnarray*}
where $\Gamma={\rm diag}(\gamma_1,
\ldots,\gamma_n) \in \rm{Mat}_n(\R)$ 
is the diagonal matrix with
entries~$\gamma_1,\ldots,\gamma_n$.

\item[(d)]
It is interesting to observe that if $p$ is  the origin 
then Formula~(\ref{UB1}) of the theorem is not applicable. 
However, one can translate $p$ away from the origin, 
and the nice thing is that  the local condition number 
does not change.
\bigskip
\end{itemize}
\end{remark}

\section{Optimization of the local condition number}
\label{Optimization of the local condition number}
In this section we introduce a strategy to improve the 
numerical stability of zero-dimensional smooth 
complete intersections. 
Let $\f(\x)=\{f_1(\x),\ldots,f_n(\x)\}$ be a 
zero-dimensional smooth complete intersection 
in $\R[\x]$, and let~$I$ be the ideal of~$\R[\x]$ 
generated by~$\f(\x)$; our aim is to find an alternative 
representation of~$I$ with minimal local  condition number. 

Motivated by Remark~\ref{remarkCN}, item (b) and (c), we consider 
the strategy of resizing each polynomial of~$\f(\x)$,
and study its effects on the condition number.
The following proposition shows that rescaling 
each~$f_j(\x)$ so that $\frac{\partial f_j}{\partial \x}(p)$ 
has unitary norm  is a nearly optimal, in some cases optimal, strategy. 
The result is obtained by adapting the method of 
Van der Sluis (see~\cite{H96}, Section 7.3) to the polynomial case.

\begin{proposition}\label{scalingCN}
Let~$p$ be a nonsingular real solution of  $\f(\x)=0$,
let $r_1 \ge 1, r_2 \ge 1$ 
be real numbers such that ${\frac{1}{r_1}+ \frac{1}{r_2}=1}$, 
including the pairs $(1, \infty)$ and $(\infty, 1)$, 
let $\gamma =(\gamma_1,\ldots,\gamma_n)$ 
be an $n$-tuple of nonzero real numbers, and
let $\g_\gamma(\x)$, ${\bm u}(\x)$ be the polynomial systems 
defined by $\g_\gamma(\x)=\{\gamma_1 f_1(\x),\ldots,\gamma_n f_n(\x)\}$ 
and~${\bm u}(\x) = \{ \|\frac{\partial f_1}{\partial \x}(p)\|_{r_2}^{-1} f_1(\x),
\ldots,\|\frac{\partial f_n}{\partial \x}(p)\|_{r_2}^{-1} f_n(\x)\}$. 

\begin{itemize}
\item[(a)] We have the inequality
$\kappa_{r_1} ({\bm u},p) \le n^{1/r_1}  \kappa_{r_1}(\g_\gamma,p)$.

\item[(b)] In particular, if  $(r_1, r_2) = (\infty, 1)$ we have the equality
$$\kappa_\infty({\bm u},p) = {\rm min}_\gamma  \kappa_\infty(\g_\gamma,p)$$
where
${\bm u}(\x) = \{ \|\frac{\partial f_1}{\partial \x}(p)\|_1^{-1} f_1(\x),
\ldots,\|\frac{\partial f_n}{\partial \x}(p)\|_1^{-1} f_n(\x)\}$.
\end{itemize}
\end{proposition}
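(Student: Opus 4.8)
The plan is to reduce the whole statement to the behaviour of induced matrix norms under diagonal (row) rescaling, with Proposition~\ref{classicalIneq2} as the main tool. Write $A=\Jac_\f(p)$; since $p$ is a nonsingular solution of $\f(\x)=0$ the matrix $A$ is invertible, and its $i$-th row $A_i=\frac{\partial f_i}{\partial \x}(p)$ is nonzero for every $i$. By the chain rule $\Jac_{\g_\gamma}(p)=\Gamma A$ with $\Gamma={\rm diag}(\gamma_1,\dots,\gamma_n)$, and $\Jac_{\bm u}(p)=DA$ with $D={\rm diag}(\|A_1\|_{r_2}^{-1},\dots,\|A_n\|_{r_2}^{-1})$; both $\Gamma$ and $D$ are invertible diagonal matrices, so $\kappa_{r_1}(\g_\gamma,p)=\|\Gamma A\|_{r_1}\,\|A^{-1}\Gamma^{-1}\|_{r_1}$ and $\kappa_{r_1}({\bm u},p)=\|DA\|_{r_1}\,\|A^{-1}D^{-1}\|_{r_1}$.

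For part (a) I would bound the two factors of $\kappa_{r_1}({\bm u},p)$ separately. The rows of $DA$ are the vectors $A_i/\|A_i\|_{r_2}$, each of $r_2$-norm $1$, so the right-hand inequality of Proposition~\ref{classicalIneq2} gives $\|DA\|_{r_1}\le n^{1/r_1}$. For the second factor, factor $A^{-1}D^{-1}=(A^{-1}\Gamma^{-1})(\Gamma D^{-1})$ and use submultiplicativity of the induced norm, $\|A^{-1}D^{-1}\|_{r_1}\le\|A^{-1}\Gamma^{-1}\|_{r_1}\,\|\Gamma D^{-1}\|_{r_1}$. Now $\Gamma D^{-1}={\rm diag}(\gamma_i\|A_i\|_{r_2})$, so its induced norm equals $\max_i|\gamma_i|\,\|A_i\|_{r_2}=\max_i\|(\Gamma A)_i\|_{r_2}$, which by the left-hand inequality of Proposition~\ref{classicalIneq2} is at most $\|\Gamma A\|_{r_1}$. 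Multiplying the two estimates yields
$$
\kappa_{r_1}({\bm u},p)\le n^{1/r_1}\,\|\Gamma A\|_{r_1}\,\|A^{-1}\Gamma^{-1}\|_{r_1}=n^{1/r_1}\,\kappa_{r_1}(\g_\gamma,p).
$$

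For part (b) take $(r_1,r_2)=(\infty,1)$, so $n^{1/r_1}=1$ and part (a) reads $\kappa_\infty({\bm u},p)\le\kappa_\infty(\g_\gamma,p)$ for every $n$-tuple $\gamma$ of nonzero reals; hence $\kappa_\infty({\bm u},p)\le\min_\gamma\kappa_\infty(\g_\gamma,p)$. On the other hand ${\bm u}(\x)$ is itself of the form $\g_\gamma(\x)$, namely with $\gamma_i=\|A_i\|_1^{-1}\ne 0$, so its condition number is one of the values over which the minimum is taken; this gives the reverse inequality and therefore the equality, with ${\bm u}$ realizing the minimum.

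All the computations are routine norm manipulations; the one step that needs a little thought is the factorization $A^{-1}D^{-1}=(A^{-1}\Gamma^{-1})(\Gamma D^{-1})$ together with the observation that the diagonal correction $\Gamma D^{-1}$, whose induced norm is the largest scaled row norm of $\Gamma A$, is controlled by $\|\Gamma A\|_{r_1}$ via Proposition~\ref{classicalIneq2}. Once this is in place, (a) is a two-line estimate and (b) follows by specializing the exponent to $0$ and noting that ${\bm u}$ lies in the family $\{\g_\gamma\}$.
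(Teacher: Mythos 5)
Your proof is correct and follows essentially the same route as the paper: write $\Jac_{\g_\gamma}(p)=\Gamma A$ and $\Jac_{\bm u}(p)=DA$, bound $\|DA\|_{r_1}\le n^{1/r_1}$ by the right-hand inequality of Proposition~\ref{classicalIneq2}, and bound $\|A^{-1}D^{-1}\|_{r_1}$ by inserting $\Gamma^{-1}\Gamma$, using submultiplicativity, and controlling the diagonal factor $\Gamma D^{-1}$ by the left-hand inequality. Your treatment of (b) is slightly more explicit than the paper's one-line remark, since you also point out that $\bm u$ itself lies in the family $\{\g_\gamma\}$ and hence realizes the minimum, but the underlying argument is the same.
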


\begin{proof}
Let $\Gamma=\rm{diag(\gamma_1,\ldots,\gamma_n)}$ 
and $D=\rm{diag}(\|\frac{\partial f_1}{\partial \x}(p)\|_{r_2}^{-1},
\ldots,\|\frac{\partial f_n}{\partial \x}(p)\|_{r_2}^{-1})$; 
then $\Jac_{\g_\gamma}(\x) = \Gamma \Jac_\f(\x)$ 
and $\Jac_{\bm u}(\x)=D \Jac_\f(\x)$. 
The condition numbers of $\g_\gamma(\x)$ and ${\bm u}(\x)$ at~$p$ 
are given by
\begin{eqnarray*}
\kappa_{r_1} (\g_\gamma,p) &
=& \| (\Gamma \Jac_\f(p))^{-1}\|_{r_1} \|\Gamma \Jac_\f(p)\|_{r_1}\\ 
\kappa_{r_1} ({\bm u},p) &
=& \| (D \Jac_\f(p))^{-1} \|_{r_1} \|D \Jac_\f(p)\|_{r_1}
\end{eqnarray*}
 From Proposition~\ref{classicalIneq2} we have
\begin{eqnarray*}
\|D \Jac_\f(p)\|_{r_1} &\le& n^{1/r_1} 
\max_i \|(D \Jac_\f(p))_i\|_{r_2} = n^{1/r_1}\\
\|(D \Jac_\f(p))^{-1}\|_{r_1} &=& 
\|\Jac_\f^{-1}(p) D^{-1}\|_{r_1} = \|\Jac_\f^{-1}(p) 
\Gamma^{-1} \Gamma D^{-1}\|_{r_1} \\
&\le& \|\Jac_\f^{-1}(p) \Gamma^{-1}\|_{r_1}  
\max_i \left ( | \gamma_i | \: \left\| \frac{\partial f_i}
{\partial \x}(p) \right\|_{r_2} \right)  \\
&\le& \|\Jac_\f^{-1}(p) \Gamma^{-1}\|_{r_1}
 \|\Gamma \Jac_\f(p)\|_{r_1} = \kappa_{r_1}(\g_\gamma,p)
\end{eqnarray*}
therefore $
\kappa_{r_1} ({\bm u},p) \le n^{1/r_1} \kappa_{r_1}(\g_\gamma,p)$
and $(a)$ is proved. To prove $(b)$ it suffices to use $(a)$ 
and observe that $n^{1/\infty} = 1$
\end{proof}

\begin{remark}
The above proposition implies that  the strategy of 
rescaling each polynomial~$f_j(\x)$ to make it unitary at~$p$ 
(see Definition~\ref{defTaylor}) is beneficial
for lowering the local condition number of~$\f(\x)$ at~$p$. 
This number is minimum when ${r=\infty}$, it is within 
factor $\sqrt{n}$ of the minimum when $r=2$.  
However, for $r=2$ we can do better, at least when all the 
polynomials $f_1(\x), \dots, f_n(\x)$ have equal degree.  
The idea is to use Remark~\ref{remarkCN}, item~(b) 
which says that when using the matrix $2$-norm, 
the local condition number attains its minimum when the 
Jacobian matrix is orthonormal. 
\end{remark}

\begin{proposition}\label{min2norm}
Let $\f = (f_1, \dots, f_n)$ be a smooth zero-dimensional complete 
intersection in~$\R[\x]$ such that  $\deg(f_1) = \cdots = \deg(f_n)$ 
and let $p\in \mathcal Z_\R(\f)$. 
Moreover, let $C = (c_{ij}) \in {\rm Mat}_n(\R)$ 
be an invertible matrix, and let $\g$ be defined 
by $\g^{\rm tr} = C\cdot \f^{\rm tr}$.
Then the following conditions are equivalent
\begin{itemize}
\item[(a)]  $\kappa_2(\g, p)=1$, the minimum possible.
\item[(b)]  $C^t C = (\Jac_\f(p) \Jac_\f(p)^t)^{-1}$.
\end{itemize}
\end{proposition}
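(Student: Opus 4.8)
The plan is to reduce the statement to the matrix-theoretic fact that $\kappa_2(M) = 1$ if and only if $M$ is a scalar multiple of an orthogonal matrix, applied to $M = \Jac_{\g}(p)$. First I would record the key computation: since $\deg(f_1) = \cdots = \deg(f_n)$ and $\g^{\rm tr} = C \cdot \f^{\rm tr}$ with $C$ constant, the chain rule gives $\Jac_{\g}(p) = C \cdot \Jac_\f(p)$, and since $p$ is a nonsingular zero both $\Jac_\f(p)$ and hence $\Jac_{\g}(p)$ are invertible. (The hypothesis of equal degrees is what makes $C$ a constant matrix rather than something depending on $\x$; if the degrees differed one could still multiply the $f_j$ by constants, but not take a genuine linear combination without introducing lower-degree terms, so the Jacobian identity would fail.)

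Next I would invoke Remark~\ref{remarkCN}(b): for the matrix $2$-norm, $\kappa_2(\g,p) = \sigma_{\max}(\Jac_{\g}(p)) / \sigma_{\min}(\Jac_{\g}(p)) \ge 1$, with equality exactly when all singular values of $\Jac_{\g}(p)$ coincide, say all equal to some $\sigma > 0$. Equivalently, writing $M = \Jac_{\g}(p) = C\,\Jac_\f(p)$, condition (a) holds iff $M M^t = \sigma^2 I$ for some $\sigma > 0$, i.e.\ iff $M M^t$ is a positive scalar multiple of the identity. So (a) is equivalent to
\begin{equation*}
C\,\Jac_\f(p)\,\Jac_\f(p)^t\,C^t = \sigma^2 I \quad\text{for some } \sigma>0.
\end{equation*}

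Then I would massage this into the stated form (b). Set $S = \Jac_\f(p)\Jac_\f(p)^t$, which is symmetric positive definite since $\Jac_\f(p)$ is invertible. The displayed equation reads $C S C^t = \sigma^2 I$, i.e.\ $S = \sigma^2 C^{-1}(C^t)^{-1} = \sigma^2 (C^t C)^{-1}$, i.e.\ $C^t C = \sigma^2 S^{-1} = \sigma^2 (\Jac_\f(p)\Jac_\f(p)^t)^{-1}$. This shows (a) implies (b) up to the scalar $\sigma^2$; to get (b) exactly I would observe that condition (b) as written forces $\sigma = 1$, so strictly speaking the equivalence (a)$\iff$(b) requires noting that rescaling $C$ by a constant rescales $\g$ by the same constant and, by Remark~\ref{remarkCN}(c), does not change $\kappa_2$ — hence one may normalise so that $\sigma^2 = 1$, or alternatively read (b) as the normalised representative of the family of optimal $C$'s. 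For the converse, if $C^t C = S^{-1}$ then $C S C^t = C (C^t C)^{-1} C^t = C C^{-1} (C^t)^{-1} C^t = I$, so $MM^t = I$, all singular values of $M$ equal $1$, and $\kappa_2(\g,p) = 1$ by Remark~\ref{remarkCN}(b), giving (b)$\Rightarrow$(a).

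The main obstacle is essentially bookkeeping rather than depth: one must be careful about the scalar ambiguity, since $\kappa_2$ is scale-invariant while the equation in (b) pins down a specific normalisation. I would handle this either by stating the equivalence modulo the scaling of Remark~\ref{remarkCN}(c), or by silently using the normalisation $\|\g\|$ compatible with (b); both are legitimate, and the rest is the standard linear-algebra identity $\kappa_2(M)=1 \iff MM^t$ scalar, together with the constant-Jacobian observation that uses the equal-degree hypothesis.
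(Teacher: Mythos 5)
Your proposal follows essentially the same route as the paper's proof (compute $\Jac_\g(p) = C\,\Jac_\f(p)$, characterize $\kappa_2=1$ in terms of $\Jac_\g(p)\Jac_\g(p)^t$, and solve for $C$), but you are \emph{more} careful than the paper, and the extra care exposes a genuine imprecision in the stated proposition. The paper's proof asserts that $\kappa_2(\g,p)=1$ if and only if $\Jac_\g(p)$ is orthonormal, i.e.\ $\Jac_\g(p)\Jac_\g(p)^t=I_n$; this is not quite right, since $\kappa_2(M)=1$ is equivalent only to $M$ being a \emph{positive scalar multiple} of an orthogonal matrix, i.e.\ $MM^t=\sigma^2 I_n$ for some $\sigma>0$. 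Consequently, as you correctly observe, (a) is invariant under $C\mapsto\lambda C$ for $\lambda\neq 0$ (by Remark~\ref{remarkCN}(c)), while (b) is not; a concrete counterexample to (a)$\Rightarrow$(b) is to take any $C$ satisfying (b) and replace it by $2C$. Thus (b)$\Rightarrow$(a) holds exactly as you and the paper argue, but (a)$\Rightarrow$(b) only holds up to the scalar $\sigma^2$, and the proposition should either read ``$C^tC$ is a positive scalar multiple of $(\Jac_\f(p)\Jac_\f(p)^t)^{-1}$'' or carry an explicit normalization of $C$. Your discussion of how to repair this (normalize $\sigma=1$, or read (b) as a normalized representative) is the correct fix, and your converse direction is exactly the paper's argument. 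In short: same approach, but you caught a scalar-ambiguity gap that the paper's proof silently elides.
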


\begin{proof}
We know that $\kappa_2(\g,p) =1$ if and only 
if the matrix $\Jac_\g(p)$ is orthonormal.
This condition can be expressed by the 
equality $\Jac_\g(p) \Jac_\g(p)^t = I_n$, 
that is $C \Jac_\f(p) \Jac_\f(p)^t C^t=I_n$ 
and the conclusion follows.
\end{proof}

We observe that condition $(b)$ of the proposition 
requires that  the entries of $C$
satisfy an underdetermined  system of $(n^2+n)/2$ 
independent quadratic 
equations in $n^2$ unknowns.

\section{Experiments}\label{Experiments}

In numerical linear algebra it is well-known 
(see for instance~\cite{BCM}, Ch. 4, Section~1)  that the 
upper bound given by the classical formula (4) of 
Remark~\ref{classical} (a)
is not necessarily sharp. 
Since our upper bound~(\ref{UB1})
generalizes the classical one, as shown in 
Remark~\ref{remarkCN},
we provide some experimental evidence 
that lowering the condition number not only 
sharpens the upper bound, but indeed
stabilizes the solution point.

\begin{example}
We consider the ideal $I=(f_1, f_2)$ in $\R[x,y]$ where 
\begin{eqnarray*}
f_1 &=& \tfrac{1}{4}x^2y + xy^2 + \tfrac{1}{4}y^3 + \tfrac{1}{5}x^2 - 
\tfrac{5}{8}xy + \tfrac{13}{40}y^2 + \tfrac{9}{40}x - \tfrac{3}{5}y + \tfrac{1}{40}\\
f_2 &=& x^3 + \tfrac{14}{13}xy^2 + \tfrac{57}{52}x^2 - \tfrac{25}{52}xy 
+ \tfrac{8}{13}y^2 - \tfrac{11}{52}x - \tfrac{4}{13}y - \tfrac{4}{13}
\end{eqnarray*}
It is a zero-dimensional smooth complete 
intersection with~$7$ real roots and
we consider the point $p=(0,1) \in \mathcal Z_\R(\f)$. 
The polynomial system $\f=\{f_1, f_2\}$ is unitary at~$p$ 
and its condition number is $\kappa_2(\f,p)=8$.
Using Proposition~\ref{min2norm} we construct a 
new polynomial system~$\g$ with minimal local 
condition number at~$p$. 
The new pair of  generators $\g$ is defined 
(see Proposition~\ref{min2norm}) by the following 
the formula $\g^{\rm tr}=C \cdot \f^{\rm tr}$, 
where $C=(c_{ij}) \in \Mat_2(\R)$ is an invertible 
matrix whose entries satisfy the following system
\begin{eqnarray*}
\left \{ \begin{array}{lll}
c_{11}^2 + c_{21}^2 &=& \tfrac{25}{16}\\
c_{11}c_{12} + c_{21}c_{22} &=& -\tfrac{15}{16}\\
c_{12}^2 + c_{22}^2 &=& \tfrac{25}{16}
\end{array} \right .
\end{eqnarray*}
A solution is given by $c_{11}=1$, $c_{12}
=0$, $c_{21}=\frac{63}{16}$, $c_{22}=-\frac{65}{16}$, and
we observe that the associated unitary polynomial 
system $\g=\{f_1, \frac{63}{16} f_1 - \frac{65}{16}f_2\}$ 
provides an alternative representation of~$I$ with 
minimal local condition 
number $\kappa_2(\g,p)=1$ at the point~$p$. 

Now we embed the system~$\f(x,y)$ into the 
family $F(a,x,y)=\{F_1, F_2\}$ where
\begin{eqnarray*}
F_1(a,x,y) &=& \tfrac{1}{4}x^2y + xy^2 
+ \tfrac{1}{4}y^3 + \tfrac{1}{5}x^2 - 
\tfrac{5}{8}xy + \left(\tfrac{13}{40}-a \right)y^2\\
&& + \left(\tfrac{9}{40}+a \right)x 
+ \left(- \tfrac{3}{5}+a \right)y + \tfrac{1}{40}-2a\\
F_2(a,x,y) &=& x^3 + \tfrac{14}{13}xy^2 
+ \tfrac{57}{52}x^2 - \tfrac{25}{52}xy 
+ \left(\tfrac{8}{13}+a \right)y^2\\
&& + \left(- \tfrac{11}{52}+a \right)x 
- \left(\tfrac{4}{13}+a \right)y - \tfrac{4}{13}+a^2
\end{eqnarray*}
We denote by $I_F(a,x,y)$ the ideal 
generated by~$F(a,x,y)$ in~$\R[a,x,y]$, 
compute the reduced {\tt Lex}-Gr\"obner 
basis of $I_F(a,x,y)\R(a)[x,y]$, and get 
$$
\{x  + \tfrac{l_1(a,y)}{d_F(a)}, \ \ y^9 + l_2(a,y)\}
$$
where $l_1(a,y),l_2(a,y) \in \R[a,y]$ have 
degree~$8$ in~$y$ and
$d_F(a) \in \R[a]$ has degree~$12$.
This basis has the shape prescribed by 
the Shape Lemma and a flat locus is given 
by $\{\alpha\in \R \;|\; d_F(\alpha) \neq 0\}$. 
We let $D_F(a,x,y)=\det(\Jac_F(a,x,y))$, 
$J_F(a,x,y)=I_F(a,x,y)+(D_F(a,x,y))$, 
compute $J_F(a,x,y) \cap \R[a]$, and 
we get the principal ideal generated 
by a univariate polynomial $h_F(a)$ of degree~$28$. 
An $I$-optimal subscheme 
is $\U_F=\{\alpha \in \R \;|\; d_F(\alpha)h_F(\alpha) \neq 0 \}$.
An open semi-algebraic subset~$\V_F$ of~$\U_F$ 
which contains the point  $\alpha_I=0$ and such that the 
fiber over each $\alpha \in \V_F$
consists of~$7$ real points, is given by the open 
interval $(\alpha_1,\alpha_2)$, 
where $\alpha_1<0$ and $\alpha_2>0$ are the real 
roots of $d_F(a)h_F(a)=0$ closest to the origin. 
Their approximate values are $\alpha_1=-0.00006$ 
and $\alpha_2=0.01136$.

To produce similar perturbations, we embed the 
system~$\g(x,y)$ into the family $G(a,x,y)=\{G_1, G_2\}$ where
\begin{eqnarray*}
G_1(a,x,y) &=& \tfrac{1}{4}x^2y + xy^2 + \tfrac{1}{4}y^3 
+ \tfrac{1}{5}x^2 - \tfrac{5}{8}xy + \left(\tfrac{13}{40}-a \right)y^2\\
&& + \left(\tfrac{9}{40}+a \right)x + \left(- \tfrac{3}{5}
+a \right)y + \tfrac{1}{40}-2a\\
G_2(a,x,y) &=& -\tfrac{65}{16}x^3 
+ \tfrac{63}{64}x^2y - \tfrac{7}{16}xy^2 
+ \tfrac{63}{64}y^3 - \tfrac{1173}{320}x^2 
- \tfrac{65}{128}xy\\
&& + \left(- \tfrac{781}{640}+a \right)y^2 
+ \left( \tfrac{1117}{640}+a \right)x 
+ \left(- \tfrac{89}{80}-a \right)y + \tfrac{863}{640}+a^2
\end{eqnarray*}
We denote by $I_G(a,x,y)$ the ideal generated 
by~$G(a,x,y)$ in~$\R[a,x,y]$, 
compute the reduced {\tt Lex}-Gr\"obner 
basis of $I_G(a,x,y)\R(a)[x,y]$, and get 
$$
\{x  + \tfrac{l_3(a,y)}{d_G(a)}, \; y^9 + l_4(a,y)\}
$$
where $l_3(a,y),l_4(a,y) \in \R[a,y]$ have 
degree~$8$ in~$y$ and
$d_G(a) \in \R[a]$ has degree~$12$, therefore  the basis has 
the shape prescribed by the Shape Lemma. 
A flat locus is given by $\{\alpha\in \R \;|\; d_G(\alpha) \neq 0\}$. 
We let $D_G(a,x,y)=\det(\Jac_G(a,x,y))$, $J_G(a,x,y)
=I_G(a,x,y)+(D_G(a,x,y))$ 
and compute $J_G(a,x,y) \cap \R[a]$. 
We get the principal ideal generated 
by a univariate polynomial $h_G(a)$ of degree~$28$. 
An $I$-optimal subscheme 
is $\U_G=\{\alpha \in \R \;|\; d_G(\alpha)h_G(\alpha) \neq 0 \}$.
An open semi-algebraic subset~$\V_G$ of~$\U_G$ 
containing the point  $\alpha_I=0$ and such that the 
fiber over each $\alpha \in \V_G$
consists of~$7$ real points is given by the 
open interval $(\alpha_3,\alpha_4)$, 
where $\alpha_3<0$ and $\alpha_4>0$ are 
the real roots of $d_G(a)h_G(a)=0$
closest to the origin. 
Their approximate values are  $\alpha_3=-0.00009$ 
and $\alpha_4=0.00914$.

Let $\alpha \in (\alpha_1,\alpha_4)$. 
According to Definition~\ref{admissible} the polynomial set 
$\bm \varepsilon(x,y)=\{-\alpha y^2 + \alpha x 
+ \alpha y - 2\alpha,\; \alpha y^2 + \alpha x 
- \alpha y + \alpha^2\}$ is an admissible perturbation 
of~$\f(x,y)$ and~$\g(x,y)$.
Further, since $\|\Jac_\f(p)^{-1} \Jac_{\bm \varepsilon}(p)\|_2 
= \sqrt{65} |\alpha| < 1$ and 
$\|\Jac_\g(p)^{-1} \Jac_{\bm \varepsilon}(p)\|_2 
= \sqrt{2} |\alpha| < 1$
Theorem~\ref{theoremCN} can be applied. 

We let  $q \in \mathcal Z_\R(\f + \bm \varepsilon)$ 
and $r \in \mathcal Z_\R(\g +\bm \varepsilon)$ 
be the two perturbations of the point~$p$. 
In order to compare the numerical behaviour of~$\f$ 
and~$\g$ at the real root~$p$ 
we compare the relative errors $\frac{\|q - p\|_2}{\|p\|_2}$ 
and $\frac{\|r - p\|_2}{\|p\|_2}$ 
for different values of~$\alpha$.
The first column of the following table contains the 
values of the local condition 
numbers of~$\f$ and~$\g$ at~$p$.
The second column contains the mean values of the 
upper bounds ${\rm UB}(\f,p)$ and ${\rm UB}(\g,p)$ 
given by Theorem~\ref{theoremCN}, 
computed for  $100$ random values 
of ~$\alpha \in (\alpha_1,\alpha_4)$. 
The third column contains the mean values of 
$\frac{\|q-p\|_2}{\|p\|_2}$ and $\frac{\|r-p\|_2}{\|p\|_2}$ 
for the same  values of $\alpha$.

\begin{table}[htb]
\centering
\begin{tabular}{|c|c|c|}
\hline
$\kappa_2(\f,p)$ & ${\rm UB}(\f,p)$ & $\frac{\|q-p\|_2}{\|p\|_2}$\\
\hline
$8$ & $0.1729$ & $0.000097$ \\
\hline \hline
$\kappa_2(\g,p)$ &${\rm UB}(\g,p)$ & $\frac{\|r-p\|_2}{\|p\|_2}$ \\
\hline
$1$ & $0.0275$ & $0.000023$\\
\hline
\end{tabular}
\end{table}
\noindent The fact that the mean values of $\frac{\|q - p\|_2}{\|p\|_2}$ are smaller than the 
mean values of~$\frac{\|r - p\|_2}{\|p\|_2}$ suggests that~$p$ is more stable when 
it is considered as a root of~$\g$ instead of as a root of~$\f$.
\end{example}

\begin{example}
We consider the ideal $I=(f_1, f_2, f_3)$ in $\R[x,y,z]$ 
where 
\begin{eqnarray*}
f_1 &=& \tfrac{6}{17}x^2 + xy - \tfrac{24}{85}x 
- \tfrac{8}{85}y - \tfrac{6}{85}\\ 
f_2 &=& \tfrac{39}{89}x^2 + \tfrac{70}{89}xy 
+ yz - \tfrac{39}{89}x + \tfrac{10}{89}y\\
f_3 &=& y^2 + 2xz + z^2 - z
\end{eqnarray*}
It  is a zero-dimensional smooth complete 
intersection with~$6$ real roots and
we consider the point $p=(1,0,0) \in \mathcal Z_\R(\f)$. 
The polynomial system $\f=\{f_1, f_2, f_3\}$ is unitary 
at~$p$ and its condition number is $\kappa_2(\f,p)=123$.
Using Proposition~\ref{min2norm} we construct a new 
polynomial system~$\g$ with minimal local condition 
number at~$p$. The new set $\g$ is defined 
by $\g^{\rm tr}=C \cdot \f^{\rm tr}$, 
where $C=(c_{ij}) \in \Mat_3(\R)$ is an invertible 
matrix whose entries satisfy the following system
\begin{eqnarray*}
\left \{ \begin{array}{llc}
c_{11}^2 + c_{21}^2 + c_{31}^2 &=& \tfrac{57229225}{15129}\\ 
c_{11}c_{12} + c_{21}c_{22} + c_{31}c_{32} &
=& -\tfrac{57221660}{15129}\\ 
c_{11}c_{13} + c_{21}c_{23} + c_{31}c_{33} &=& 0\\ 
c_{12}^2 + c_{22}^2 + c_{32}^2 &=& \tfrac{57229225}{15129}\\ 
c_{12}c_{13} + c_{22}c_{23} + c_{32}c_{33} &=& 0\\
c_{13}^2 + c_{23}^2 + c_{33}^2 &=& 1\\
\end{array} \right .
\end{eqnarray*}
A solution is given by $c_{11}=c_{33}=1$, $c_{12}=c_{13}
=c_{23}=c_{32}=0$, ${c_{21}=\frac{7564}{123}}$, 
${c_{22}=-\frac{7565}{123}}$. 
Therefore  the associated unitary 
polynomial system is the following
$\g=\{f_1, \frac{7564}{123}f_1 -\frac{7565}{123}f_2,
f_3\}$. It provides an alternative representation of~$I$ 
with minimal local condition number $\kappa_2(\g,p)=1$ at the point $p$. 

We embed the system~$\f(x,y,z)$ into the 
family $F(a,x,y,z)=\{F_1, F_2, F_3\}$ where
\begin{eqnarray*}
F_1(a,x,y,z) &=& \tfrac{6}{17}x^2 + (1-a^2)xy 
+ (-\tfrac{24}{85}+a)x +(-\tfrac{8}{85}-a)y + (-\tfrac{6}{85}+a^2)\\
F_2(a,x,y,z) &=& \tfrac{39}{89}x^2 + (\tfrac{70}{89}+a)xy 
+ yz + (\tfrac{39}{89}+a)x + (\tfrac{10}{89}+a)y\\
F_3(a,x,y,z) &=& y^2 + 2xz + (1-2a)z^2 + (-1+a)z
\end{eqnarray*}
We denote by $I_F(a,x,y,z)$ the ideal generated 
by~$F(a,x,y,z)$ in~$\R[a,x,y,z]$, 
compute the reduced {\tt Lex}-Gr\"obner basis 
of $I_F(a,x,y,z)\R(a)[x,y,z]$, and get 
$$
\{x  + \tfrac{l_1(a,z)}{d_F(a)}, \; y + 
\tfrac{l_2(a,z)}{d_F(a)}, \; z^9 + \tfrac{l_3(a,z)}{e_F(a)}\}
$$
where $l_1(a,z),l_2(a,z),l_3(a,z) \in \R[a,z]$ have degrees
$\deg_z(l_1)=\deg_z(l_2)=7$ and $\deg_z(l_3)=8$ while
$d_F(a) \in \R[a]$ has degree~$54$, 
and $e_F(a) \in \R[a]$ has degree~$11$.
The basis has the shape prescribed by the Shape Lemma. 
A flat locus is given 
by $\{\alpha\in \R \;|\; d_F(\alpha) e_F(\alpha) \neq 0\}$. 
We let $D_F(a,x,y,z)=\det(\Jac_F(a,x,y,z))$, 
$J_F(a,x,y,z)=I_F(a,x,y,z)+(D_F(a,x,y,z))$ 
and compute $J_F(a,x,y,z) \cap \R[a]$. 
We get the principal ideal generated 
by a univariate polynomial $h_F(a)$ of degree~$59$. 
An $I$-optimal subscheme is $\U_F=\{\alpha \in \R \;|\; 
d_F(\alpha)e_F(\alpha)h_F(\alpha) \neq 0 \}$.
An open semi-algebraic subset~$\V_F$ of~$\U_F$ 
containing the point $\alpha_I=0$ and such that 
the fiber over each $\alpha \in \V_F$
consists of~$6$ real points is given by the open 
interval $(\alpha_1,\alpha_2)$, 
where $\alpha_1<0$ and $\alpha_2>0$ are the 
real roots of $d_F(a)e_F(a) h_F(a)=0$
closest to the origin. Their approximate values 
are $\alpha_1=-0.17082$ and $\alpha_2=0.20711$.

To produce similar perturbations, 
we embed the system~$\g(x,y,z)$ into the family 
$G(a,x,y,z)=\{G_1, G_2, G_3\}$ where
\begin{eqnarray*}
G_1(a,x,y) &=& \tfrac{6}{17}x^2 + (1-a^2)xy 
+ (-\tfrac{24}{85}+a)x +(-\tfrac{8}{85}-a)y + (-\tfrac{6}{85}+a^2)\\
G_2(a,x,y) &=& -\tfrac{3657}{697}x^2 
+ (\tfrac{538}{41}+a)xy - \tfrac{7565}{123}yz 
+ (\tfrac{33413}{3485}+a)x \\
&& +(- \tfrac{44254}{3485}+a)y - \tfrac{15128}{3485}\\
G_3(a,x,y) &=& y^2 + 2xz + (1-2a)z^2 + (-1+a)z
\end{eqnarray*}
We denote by $I_G(a,x,y,z)$ the ideal 
generated by~$G(a,x,y,z)$ in~$\R[a,x,y,z]$, 
compute the reduced {\tt Lex}-Gr\"obner 
basis of $I_G(a,x,y,z)\R(a)[x,y,z]$, and get 
$$
\{x  + \tfrac{l_4(a,z)}{d_G(a)}, \; y 
+ \tfrac{l_5(a,z)}{d_G(a)}, \; z^9 + \tfrac{l_6(a,z)}{e_G(a)}\}
$$
where $l_4(a,z),l_5(a,z),l_6(a,z) \in \R[a,z]$ have degrees
$\deg_z(l_4)=\deg_z(l_5)=7$ and $\deg_z(l_6)=8$ while
$d_G(a) \in \R[a]$ has degree~$54$, 
and $e_G(a) \in \R[a]$ has degree~$11$.
The basis has the shape prescribed by the Shape Lemma.  
A flat locus is given by $\{\alpha\in \R \;|\; d_{G1}
(\alpha) d_{G2}(\alpha) \neq 0\}$. 
We let $D_G(a,x,y,z)=\det(\Jac_G(a,x,y,z))$, 
$J_G(a,x,y,z)=I_G(a,x,y,z)+(D_G(a,x,y,z))$ 
and compute $J_G(a,x,y,z) \cap \R[a]$. 
We get the principal ideal generated by a 
univariate polynomial $h_G(a)$ of degree~$59$. 
An $I$-optimal subscheme is 
$\U_G=\{\alpha \in \R \;|\; d_G(\alpha)e_G(\alpha)h_G(\alpha) \neq 0 \}$.
An open semi-algebraic subset~$\V_G$ of~$\U_G$ 
containing the point $\alpha_I=0$ and such that the 
fiber over each $\alpha \in \V_G$
consists of~$6$ real points is given by the open 
interval $(\alpha_3,\alpha_4)$, 
where $\alpha_3<0$ and $\alpha_4>0$ are the 
real roots of $d_G(a) e_G(a) h_G(a)=0$
closest to the origin. Their approximate 
values are $\alpha_3=-0.02942$ 
and $\alpha_4=0.03312$.

Let $\alpha \in (\alpha_3,\alpha_4)$. According to 
Definition~\ref{admissible} the polynomial set 
$\bm \varepsilon(x,y)=\{-\alpha^2xy 
+ \alpha x - \alpha y + \alpha^2, \; \alpha xy + \alpha x 
+ \alpha y, \; -2 \alpha z^2+ \alpha z\}$ is an 
admissible perturbation of~$\f(x,y,z)$ and~$\g(x,y,z)$.

We let  $q \in \mathcal Z_\R(\f + \bm \varepsilon)$ 
and $r \in \mathcal Z_\R(\g +\bm \varepsilon)$ 
be the two perturbations of the point~$p$. 
In order to compare the numerical behaviour of~$\f$ 
and~$\g$ at the real root~$p$ 
we compare the relative errors $\frac{\|q - p\|_2}{\|p\|_2}$ 
and $\frac{\|r - p\|_2}{\|p\|_2}$ for different values of~$\alpha$.
The first column of the following table contains the 
values of the local condition numbers of~$\f$ and~$\g$ at~$p$. 
The second column contains the mean values of 
$\frac{\|q-p\|_2}{\|p\|_2}$ and $\frac{\|r-p\|_2}{\|p\|_2}$ 
for $100$ random values of  $\alpha \in (\alpha_1,\alpha_4)$. 

\begin{table}[htb]
\centering
\begin{tabular}{|c|c|c|}
\hline
$\kappa_2(\f,p)$ & $\frac{\|q-p\|_2}{\|p\|_2}$\\
\hline
$123$ & $0.0436$ \\
\hline \hline
$\kappa_2(\g,p)$ & $\frac{\|r-p\|_2}{\|p\|_2}$ \\
\hline
$1$ &  $0.0221$\\
\hline
\end{tabular}
\end{table}

\noindent As in the example before, the fact that the mean values of $\frac{\|q - p\|_2}{\|p\|_2}$ are smaller than the 
mean values of $\frac{\|r - p\|_2}{\|p\|_2}$ suggests that~$p$ is more stable when 
it is considered as a root of~$\g$ instead of as a root of~$\f$.
\end{example}

\bigbreak\bigbreak


\end{document}